\newcommand{\scr}[1]{\ensuremath{\mathcal{#1}}}
\newcommand{\bb}[1]{\ensuremath{\mathbb{#1}}}
\def\address#1#2{\begingroup
\noindent\parbox[t]{7.8cm}{%
\small{\scshape\ignorespaces#1}\par\vskip1ex
\noindent\small{\itshape E-mail address}%
\/: #2\par\vskip4ex}\hfill%
\endgroup}%
\sloppy \pagestyle{plain}\binoppenalty=10000 \relpenalty=10000
\author{Nathan Owen Ilten, Jacob Lewis, \& Victor Przyjalkowski}
\title{Toric Degenerations of Fano Threefolds Giving Weak Landau--Ginzburg Models}
\date{}
\newcommand{\CC}{\mathbb{C}}
\newcommand{\ZZ}{\mathbb{Z}}
\newcommand{\QQ}{\mathbb{Q}}
\newcommand{\mcH}{\mathcal{H}}
\newtheorem{lemma}{Lemma}[section]
\newtheorem{prop}[lemma]{Proposition}
\newtheorem{thm}[lemma]{Theorem}
\newtheorem{conj}[lemma]{Conjecture}
\newtheorem*{fmainthm*}{First Main Theorem}
\newtheorem*{smainthm*}{Second Main Theorem}
\theoremstyle{definition}
\newtheorem{ex}[lemma]{Example}
\newtheorem{remark}[lemma]{Remark}
\newtheorem{defn}[lemma]{Definition}
\newcommand{\NN}{{\mathbb N}}
\newcommand{\PP}{{\mathbb P}}
\newcommand{\Aff}{{\mathbb A}}
\newcommand{\conv}{\mathrm{conv}\,}
\newcommand{\init}{\mathrm{init}\,}
\newcommand{\proj}{\mathrm{Proj}\,}
\newcommand{\itc}[1]{\textup{#1}}
\newcommand{\pic}{\mathrm{Pic}\,}
\newcommand{\Spec}{\mathrm{Spec}\,}
\newtheorem{myremark}[lemma]{Remark}
\newtheorem{mydefinition}[lemma]{Definition}
\begin{document}
\maketitle
\begin{abstract}
We show that every Picard rank one smooth Fano threefold has a weak Landau--Ginzburg model coming from a toric degeneration.
The fibers of these Landau--Ginzburg models can be compactified to K3 surfaces with Picard lattice of rank 19.
We also show that any smooth Fano variety of arbitrary dimension which is a complete intersection of Cartier divisors in weighted projective space has a very weak Landau--Ginzburg model coming from a toric degeneration.
\end{abstract}

\noindent Keywords: Toric Varieties, Landau--Ginzburg Models, Mirror Symmetry, Degenerations

\noindent MSC: 14J33;14J45;14M25;32G20;14J28

\section*{Introduction}
One of the many interpretations of mirror symmetry conjecturally relates the quantum cohomology of a smooth Fano variety $X$ to the Picard--Fuchs operator of a pencil $f:Y\to\CC$ called a Landau--Ginzburg model for $X$. Given some Fano $X$, it is not clear whether such a Landau--Ginzburg model exists, how to find one assuming the existence, or what additional assumptions one should make on such $f$ to ensure  uniqueness.

Given a pencil $Y\to\CC$, passing to certain open subsets of $Y$ will preserve the part of the Picard--Fuchs operator relevant to mirror symmetry.
In \cite[Conjecture 36]{Prz09}, the second author conjectured that one can always find a Landau--Ginzburg model of the form $f:Y\to\CC$, where $Y=(\CC^*)^n$ is a torus of dimension equal to that of $X$. In this case, $f$ can be represented by a Laurent polynomial in $n$ variables. The underlying motivation is that if $X$ degenerates to some ``nice'' toric variety with moment polytope $\nabla$, the quantum cohomology of $X$ should be related to the Picard--Fuchs operator for a Laurent polynomial whose Newton polytope is dual to $\nabla$. Thus, this conjecture motivates the question concerning  to which toric varieties a given Fano $X$ degenerates.

Since smooth Fano threefolds have been completely classified, see \cite{Isk77}, \cite{Isk78}, and \cite{mori:81a}, they provide a good testing ground for this conjecture. Indeed, in \cite{Prz09}, the second author has shown that for all smooth Fano threefolds of Picard rank one, there is a Laurent polynomial giving a weak Landau--Ginzburg model, see Subsection \ref{subsec:ms} for a precise definition. The first main result of this present article is to show that these Laurent polynomials do in fact come from toric degenerations of the corresponding Fano varieties:

\begin{fmainthm*}[Theorem \ref{theorem:Main}]
Each smooth Fano threefold of Picard rank 1 has a weak Landau--Ginzburg model associated with a toric degeneration.
More precisely, the Laurent polynomials in Table \ref{table} are weak Landau--Ginzburg models for corresponding Fano varieties. For each polynomial $f$ in the table, the corresponding Fano degenerates to the toric variety with
moment polytope dual to the Newton polytope of $f$.
\end{fmainthm*}

We construct these toric degenerations via a number of techniques.  For Fano complete intersections in weighted projective spaces, we show the existence of a very weak Landau--Ginzburg model with corresponding toric degeneration in arbitrary dimension, see Theorem \ref{thm:ci}. The essential ingredient here is K.\,Altmann's construction of toric deformations, \cite{altmann:95a}.  For Picard rank one Fano threefolds, we deal with the remaining cases by using techniques of monomial degenerations \cite{ilten:11a} and
previously known small toric degenerations \cite{galkin:08a}.
For additional techniques in constructing toric degenerations not applied here, see \cite{alexeev:04a}
and \cite{kapustka:11a}.

The fibers of the Landau--Ginzburg models we consider can be compactified to K3 surfaces as shown in \cite{Prz09}. In the present paper, we show that the Picard lattices of these surfaces all have the expected rank:
\begin{smainthm*}[Theorem \ref{appendixthm}]
 Let $X$ be a Fano threefold of Picard number one, and $f$ the Laurent polynomial for $X$ in Table 1.  Then the fibers of $f$ compactify to a family of K3 surfaces of Picard rank 19.
\end{smainthm*}

Recently T.\,Coates, A.\,Corti, S.\,Galkin, V.\ Golyshev, A.\,Kasprzyk et al.  have made progress in computing $I$-series and very weak Landau--Ginzburg
models for all smooth Fano threefolds of any rank (see~\cite{fanosearch}).
Some of them are known to be given by toric degenerations. The natural problem is to generalize this paper to all Fano threefolds
using their work. J.\,Christophersen and N.\,Ilten have recently classified all embedded degenerations of smooth Fano threefolds of degree at most twelve to toric Fano varieties with Gorenstein singularities (see~\cite{ilten:12a}).
Also, V.\,Batyrev and M.\,Kreuzer 
have recently constructed degenerations of rank one Fano threefolds to complete intersections in toric varieties (see~\cite{BK12}).

A recent idea of L.\,Katzarkov  is to relate the vanishing cycles of the central fibers of compactified weak Landau--Ginzburg models
for Fano varieties with birational
invariants of these varieties.
In a series of papers (
\cite{Prz09}, \cite{IKP11}, 
\cite{KP11}, \cite{CKPa}, \cite{CKPb}, \cite{DKLP}), this idea is applied to the weak Landau--Ginzburg models discussed in this paper to study certain invariants of Fano threefolds
(Hodge type, rationality, etc.).

This article is organized as follows. In Section \ref{sec:prelim} we introduce notation and necessary definitions, first dealing with polytopes and toric varieties, and then with Landau--Ginzburg models. We then introduce our techniques of toric degeneration in Section \ref{sec:degen}; in particular, Section \ref{subsec:ci} contains our result regarding toric degenerations of Fano complete intersections. In Section \ref{sec:main} we then collect everything together to prove the first main theorem.
Section \ref{sec:picrk} then contains the discussion of the Picard lattices for the compactified fibers of our Landau--Ginzburg models.

\medskip

{\bf Acknowledgments.}
The authors are grateful to L.\,Katzarkov for helpful comments.

N.\,I. was supported by the Max-Planck-Institut f\"ur Mathematik. V.\,P. was partially supported by NSF FRG DMS-0854977, NSF DMS-0854977, NSF DMS-0901330, grants FWF P 24572-N25 and FWF P20778,
RFFI grants 11-01-00336-a, 11-01-00185-a, and 12-01-31012, grants MK$-1192.2012.1$,
NSh$-5139.2012.1$, and AG Laboratory GU-HSE, RF government
grant, ag. 11 11.G34.31.0023.
J.\,L. was supported by NSF grant OISE-0965183.
The paper was partially written during the first author's visit to Moscow with support from the  Dynasty Foundation.

\newpage

\begin{center}
\begin{longtable}{||c|c|c|p{5cm}|p{6cm}||}
  \hline
  No. & Index & Degree &
\begin{minipage}[c]{5cm}
\centering
\vspace{.1cm}

  Description

\vspace{.1cm}

\end{minipage}
  &
\begin{minipage}[c]{5cm}
\centering
\vspace{.1cm}

  Weak LG model

\vspace{.1cm}

\end{minipage}
\\
  \hline
  \hline
  1 & 1 & 2 &
\begin{minipage}[c]{5cm}
\vspace{.1cm}

Sextic double solid $X_2$ (double cover of $\PP^3$ ramified over smooth sextic).

\vspace{.1cm}

\end{minipage}
&
\begin{minipage}[c]{6cm}
\vspace{.1cm}
\centering

$\frac{(x+y+z+1)^6}{xyz}$

\vspace{.1cm}

\end{minipage}
    \\
  \hline
  2 & 1 & 4 &
\begin{minipage}[c]{5cm}
\vspace{.1cm}

  The general element of the family is quartic $X_4$.

\vspace{.1cm}

\end{minipage}
&
\begin{minipage}[c]{6cm}
\vspace{.1cm}
\centering

  $\frac{(x+y+z+1)^4}{xyz}$

\vspace{.1cm}

\end{minipage}
  \\
  \hline
  3 & 1 & 6 &
\begin{minipage}[c]{5cm}
\vspace{.1cm}

Smooth complete intersection of quadric and cubic $X_6$.

\vspace{.1cm}

\end{minipage}
&
\begin{minipage}[c]{6cm}
\vspace{.1cm}
\centering

$\frac{(x+1)^2(y+z+1)^3}{xyz}$

\vspace{.1cm}

\end{minipage}
\\
  \hline
  4 & 1 & 8 &
\begin{minipage}[c]{5cm}
\vspace{.1cm}

Smooth complete intersection of three quadrics $X_8$.

\vspace{.1cm}

\end{minipage}
&
\begin{minipage}[c]{6cm}
\vspace{.1cm}
\centering

$\frac{(x+1)^2(y+1)^2(z+1)^2}{xyz}$

\vspace{.1cm}

\end{minipage}
\\
  \hline
  5 & 1 & 10 &
\begin{minipage}[c]{5cm}
\vspace{.1cm}

  The general element is $X_{10}$, a section of $G(2,5)$ by 2 hyperplanes in Pl\"{u}cker embedding and quadric.


\end{minipage}

&
\begin{minipage}[c]{6cm}
\vspace{.1cm}
\centering

$\frac{(1+x+y+z+xy+xz+yz)^2}{xyz}$


\end{minipage}
\\
  \hline
  6 & 1 & 12 &
\begin{minipage}[c]{5cm}
\vspace{.1cm}

  Variety $X_{12}$.

\vspace{.1cm}

\end{minipage}
&
\begin{minipage}[c]{6cm}
\vspace{.1cm}
\centering

  $\frac{(x+z+1)(x+y+z+1)(z+1)(y+z)}{xyz}$

\vspace{.1cm}

\end{minipage}
\\
  \hline
  7 & 1 & 14 &
\begin{minipage}[c]{5cm}
\vspace{.1cm}

  Variety $X_{14}$, a section of $G(2,6)$ by 5 hyperplanes in Pl\"{u}cker embedding.

\vspace{.1cm}

\end{minipage}
  &

\begin{minipage}[c]{6cm}
\vspace{.1cm}
\centering

  $\frac{(x+y+z+1)^2}{x}$

  $+\frac{(x+y+z+1)(y+z+1)(z+1)^2}{xyz}$

\vspace{.1cm}

\end{minipage}
\\
  \hline
  8 & 1 & 16 &
\begin{minipage}[c]{5cm}
\vspace{.1cm}

  Variety $X_{16}$.

  \vspace{.1cm}

\end{minipage}
  &
  \begin{minipage}[c]{6cm}
\vspace{.1cm}
\centering

$\frac{(x+y+z+1)(x+1)(y+1)(z+1)}{xyz}$

\vspace{.1cm}

\end{minipage}
\\
  \hline
  9 & 1 & 18 &
\begin{minipage}[c]{5cm}
\vspace{.1cm}

  Variety $X_{18}$.

\vspace{.1cm}

\end{minipage}
  &
\begin{minipage}[c]{6cm}
\vspace{.1cm}
\centering

$\frac{(x+y+z)(x+xz+xy+xyz+z+y+yz)}{xyz}$
\vspace{.1cm}

\end{minipage}
\\
  \hline
  10 & 1 & 22 &
\begin{minipage}[c]{5cm}
\vspace{.1cm}

  Variety $X_{22}$.

\vspace{.1cm}

\end{minipage}
  &
\begin{minipage}[c]{6cm}
\vspace{.1cm}
\centering


$\frac{(z + 1) (x + y + 1) (x y + z)}{x y z} + \frac{x y}{z} + z + 3$

\vspace{.1cm}

\end{minipage}
\\
  \hline
  11 & 2 & $8\cdot 1$ &
\begin{minipage}[c]{5cm}
\vspace{.1cm}

  Double Veronese cone $V_{1}$
  (double cover of the cone over the Veronese surface branched in a smooth cubic).

\vspace{.1cm}

\end{minipage}
&
\begin{minipage}[c]{6cm}
\vspace{.1cm}
\centering

  $\frac{(x+y+1)^6}{xy^2z}+z$

\vspace{.1cm}

\end{minipage}
\\
  \hline
  12 & 2 & $8\cdot 2$ &
  \begin{minipage}[c]{5cm}
\vspace{.1cm}

Quartic double solid $V_2$ (double cover of $\PP^3$ ramified over smooth quartic).

\vspace{.1cm}

\end{minipage}
&
\begin{minipage}[c]{6cm}
\vspace{.1cm}
\centering

$\frac{(x+y+1)^4}{xyz}+z$

\vspace{.1cm}

\end{minipage}
\\
  \hline
  13 & 2 & $8\cdot 3$ &
  \begin{minipage}[c]{5cm}
\vspace{.1cm}

Smooth cubic $V_3$.

\vspace{.1cm}

\end{minipage}
&
\begin{minipage}[c]{6cm}
\vspace{.1cm}
\centering

$\frac{(x+y+1)^3}{xyz}+z$

\vspace{.1cm}

\end{minipage}
\\
  \hline
  14 & 2 & $8\cdot 4$ &
\begin{minipage}[c]{5cm}
\vspace{.1cm}

  Smooth intersection of two quadrics $V_4$.

\vspace{.1cm}

\end{minipage}
&
\begin{minipage}[c]{6cm}
\vspace{.1cm}
\centering

  $\frac{(x+1)^2(y+1)^2}{xyz}+z$

\vspace{.1cm}

\end{minipage}
\\
  \hline
  15 & 2 & $8\cdot 5$ &
\begin{minipage}[c]{5cm}
\vspace{.1cm}

  Variety $V_{5}$, a section of $G(2,5)$ by 3 hyperplanes in Pl\"{u}cker embedding.

\vspace{.1cm}

\end{minipage}
  &
\begin{minipage}[c]{6cm}
\vspace{.1cm}
\centering

  $x+y+z+\frac{1}{x}+\frac{1}{y}+\frac{1}{z}+xyz$

\vspace{.1cm}

\end{minipage}
\\
  \hline
  16 & 3 & $27\cdot 2$ &
\begin{minipage}[c]{5cm}
\vspace{.1cm}

  Smooth quadric $Q$.

\vspace{.1cm}

\end{minipage}
&
\begin{minipage}[c]{6cm}
\vspace{.1cm}
\centering

  $\frac{(x+1)^2}{xyz}+y+z$

\vspace{.1cm}

\end{minipage}
\\
  \hline
  17 & 4 & $64$ &
\begin{minipage}[c]{5cm}
\vspace{.1cm}

  $\PP^3$.

\vspace{.1cm}

\end{minipage}
&
\begin{minipage}[c]{6cm}
\vspace{.1cm}
\centering

  $x+y+z+\frac{1}{xyz}$

\vspace{.1cm}

\end{minipage}
\\
  \hline
  \hline

\caption[]{\label{table}Weak Landau--Ginzburg models for Fano threefolds with toric degenerations}
\end{longtable}
\end{center}

\section{Preliminaries}\label{sec:prelim}
\subsection{Polytopes and Toric Varieties}
We begin by fixing notation and introducing some basic concepts for toric varieties, see \cite{fulton:93a} for more details. Throughout the article, we will use $N$ to denote some lattice, with $M$ its dual, and $N_\QQ$, $M_\QQ$ the associated $\QQ$-vector spaces.

For any Laurent polynomial $f=\sum_{v\in N} c_v\cdot\chi^v$ in $\CC[N]$, its \emph{Newton polytope} is defined to be
$$
\Delta_f:=\conv \{v\ |\ c_v\neq 0\}.
$$

For any polytope $\Delta$ in $N_\QQ$ containing the origin in its interior, we define its \emph{dual polytope} to be
$$
\Delta^*:=\{u\in M_\QQ\ |\ \min_{v\in\Delta} \langle v,u\rangle \geq -1\}.
$$
If $\Delta$ and $\Delta^*$ are both lattice polytopes then they are called \emph{reflexive}, see \cite{batyrev:94a} for more details.

Consider now some rational polytope $\nabla\subset M_\QQ$.
This gives rise to two semigroups:
\begin{align*}
	S_\nabla:=\{ (u,k)\in M\times \NN\ |\ u\in k\cdot(\nabla\cap M)\}\\
	\widetilde{S}_\nabla:=\{ (u,k)\in M\times \NN\ |\ u\in (k\cdot\nabla)\cap M\}
\end{align*}
with $S_\nabla\subset \widetilde{S}_\nabla$.
From these semigroups, we can construct projective toric varieties
\begin{align*}
\PP(\nabla):=\proj \CC[S_\nabla],\qquad
\widetilde\PP(\nabla):=\proj \CC[\widetilde{S}_\nabla].
\end{align*}
Via this construction, $\PP(\nabla)$ is embedded in $\PP^n$ with $n=\#\nabla\cap M-1$, whereas $\widetilde{\PP}(\nabla)$ in general is only embedded in some weighted projective space. The dimension of $\PP(\nabla)$ is the dimension of the convex hull of $\nabla\cap M$, and the dimension of $\widetilde\PP(\nabla)$ is the dimension of $\nabla$.
The inclusion of semigroups induces a map $\rho:\widetilde\PP(\nabla)\to\PP(\nabla)$;  if $\nabla$ is a lattice polytope such that $\nabla\cap M$ generates the lattice $M$, then this is simply the normalization map. We say that $\nabla$ is \emph{very ample} if $\rho$ is an isomorphism. This is in particular the case if $S_\nabla=\widetilde{S}_\nabla$.
Note that if $\nabla$ is a lattice polytope admitting a unimodular triangulation, then we do in fact have this equality, i.e. $S_\nabla=\widetilde{S}_\nabla$.

Consider a lattice polytope $\Delta\in N_\QQ$ with the origin in its interior whose vertices are all primitive lattice elements, and
set $X=\widetilde{\PP}(\Delta^*)$. Then $X$ is Fano,
i.e. $-K_X$ is 
ample. If $\Delta$ is reflexive, then $X$ is even Gorenstein. Furthermore, for $k\in \NN$, $k\cdot \Delta^*$ is very ample if and only if $k(-K_X)$ is. For $k (-K_X)$ very ample, the corresponding embedding of $X$ is given by $\PP(k\cdot\Delta^*)$.
Finally, $X$ has at worst canonical singularities if and only if the sole lattice point in the interior of $\Delta$ is the origin.

\subsection{Mirror Symmetry of Variations of Hodge Structures}
\label{subsec:ms}
We state a version of the mirror symmetry conjecture of variations of Hodge structures adopted to our goals following~\cite{Prz09}.
For more details,  see loc. cit. 
and the references therein.

For any smooth Fano variety $X$ (via its Gromov--Witten invariants) one can construct the so called \emph{regularized quantum differential operator} $L_X$
(equivalently, Dubrovin's second structural connection), see for instance~\cite{Prz07}.
For \emph{quantum minimal varieties} (corresponding, in particular,
to Fano complete intersections in weighted projective spaces or Fano threefolds of Picard rank 1) they are \emph{of type DN}, see say~\cite{GS07} or~\cite{Prz08}.
Such operators were studied in~\cite{GS07}.

We define this explicitly for a Fano threefold $X$. By
$$
a_{ij}=\langle(-K_X)^i,(-K_X)^{3-j},-K_X\rangle_{j-i+1}, \ \ \ 0\leq
i\leq j\leq 3,\ j>0,
$$
we denote the Gromov--Witten invariant whose meaning is the expected number of
rational curves of anticanonical degree $j-i+1$ that intersect
general representatives of the homological classes dual to
$(-K_X)^i,(-K_X)^{3-j},-K_X$.
It turns out that such numbers determine the even part of the Gromov--Witten theory of
$X$. Moreover, the regularized quantum $\mathcal D$-module for $X$ may
be represented by a differential equation of type D3 with polynomial coefficients in the $a_{ij}$'s:

\begin{mydefinition}
\label{definition:D3} Consider the ring $\mathcal D=\CC[t,
\frac{\partial}{\partial t}]$ and  differential operator
$D=t\frac{\partial}{\partial t}\in \mathcal D$. \emph{The
regularized quantum differential operator} or \emph{operator of type
D3} associated with the Fano threefold $X$ is the operator
$$
\begin{array}{l}
L_X={D}^{3}-t \left (2\,D+1\right )\left (\lambda {D}^{2
}+(a_{{11}}+\lambda){D}^{2}+\lambda D+(a_{{11}}+\lambda)D+\lambda\right ) \\
+ {t}^{2} \left (D+1\right ) \left
(({a_{{11}}}+\lambda)^{2}{D}^{2}+{\lambda}^{2}{D}^{2}+4\,(a_{{11}}+\lambda)\lambda
{D}^{2}-a_{{12}}{D}^{2}-2\,a_
{{01}}{D}^{2} \right. \\
\left. + 8\,(a_{{11}}+\lambda)\lambda
D-2\,a_{{12}}D+2\,{\lambda}^{2}
D-4\,a_{{01}}D+2\,({a_{{11}}}+\lambda)^{2}D+6\,(a_{{11}}+\lambda)\lambda
\right. \\ \left.+{\lambda}^ {2}-4\,a_{{01}}\right ) - {t}^{3}\left
(2\, D+3\right )\left (D+2\right )\left (D+1\right )\left
({\lambda}^{2}(a_{{11}}+\lambda)+{
(a_{{11}}}+\lambda)^{2}\lambda-a_{{12}}\lambda+a_{{02}}\right.
\\ \left. -(a_{{11}}+\lambda)a_{{01} }-a_{{01}}\lambda\right ) + {t}^{4}\left
(D+3 \right )\left (D+2\right )\left (D+1\right )\left
(-{\lambda}^{2}a_{
{12}}+2\,a_{{02}}\lambda+{\lambda}^{2}({a_{{11}}}+\lambda)^{2}
\right. \\ \left. -a_{{03}}+
{a_{{01}}}^{2}-2\,a_{{01}}(a_{{11}}+\lambda)\lambda\right ),
\end{array}
$$
defined up to a shift $\lambda\in\CC$.
\end{mydefinition}

\begin{mydefinition}(The unique) analytic solution of the equation
$L_XI=0$ of type
$$
I^X_{H^0}=1+a_1t+a_2t^2+\ldots\in \CC [[t]],\ \ \ \ a_i\in \CC,
$$
is called \emph{the fundamental term of the regularized $I$-series}
of $X$.
\end{mydefinition}

According to A.\,Givental this series is the constant term (with respect to cohomology) of
the regularized $I$-series for $X$, i.\,e. of the generating series for 1-pointed
Gromov--Witten invariants (see, for instance,~\cite{Prz08}).

Consider the torus $(\CC^*)^n= \Spec \CC[\ZZ^n]$ and a regular function $f$ on it. This function may
be represented by a Laurent polynomial in the variables $x_1,\ldots,x_n$. Let $\phi_f(i)$ be the constant
term  of $f^i$. Put
$$
\Phi_f=\sum_{i=0}^\infty \phi_f(i)\cdot t^i\in \CC[[t]].
$$

\begin{mydefinition} The series $\Phi_f$ 
is
called \emph{the constant terms series} of $f$.
\end{mydefinition}

The following theorem is a sort of mathematical folklore (see, for
instance,~\cite[Proposition~2.3]{Prz07}). It states that the
constant terms series of Laurent polynomial is the main period of a
pencil given by this polynomial.

\begin{thm}
\label{Proposition: Picard--Fuchs} Consider a pencil $(\CC^*)^n\to
\Aff^1=\PP^1\setminus\{0\}$ given by the  Laurent polynomial $f\in
\CC[\ZZ^n]$ with fibers
$Y_\lambda=\{1-\lambda f=0\}$ for $\lambda\in \CC^*\subset \PP^1$ and $Y_\infty=\{f=0\}$.
Assume that the Newton polytope of $f$ contains $0$ in the interior and let
$t$ be a local coordinate at $0$. Then there is a
fiberwise $(n-1)$-form $\omega_t\in \Omega^{n-1}_{(\CC^*)^n/\Aff^1}$ and
\itc{(}locally defined\itc{)} fiberwise $(n-1)$-cycle $\Delta_t$
such that
$$
\Phi_f=\int_{\Delta_t}\omega_t.
$$
\end{thm}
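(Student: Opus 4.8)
The plan is to prove both halves at once: first rewrite $\Phi_f$ as a single integral over a real $n$-torus, and then reinterpret that integral as a period of the pencil $Y_\lambda$ via residues. Write $\frac{dx}{x}$ for the invariant form $\frac{dx_1}{x_1}\wedge\cdots\wedge\frac{dx_n}{x_n}$ and let $\Gamma=\{|x_1|=r_1,\dots,|x_n|=r_n\}$ be a real $n$-torus. The multivariate Cauchy formula gives $\phi_f(i)=\frac{1}{(2\pi i)^n}\int_\Gamma f^i\,\frac{dx}{x}$, since integrating a monomial $\chi^v$ over $\Gamma$ extracts precisely the coefficient of $v=0$. Choosing the radii $r_j$ and restricting $t$ to a small disc so that $|tf|<1$ holds on $\Gamma$, I can sum the geometric series under the integral sign to obtain
\[
\Phi_f=\sum_{i\ge 0}\phi_f(i)\,t^i=\frac{1}{(2\pi i)^n}\int_\Gamma \frac{1}{1-tf}\,\frac{dx}{x}.
\]

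Next I would recognize the integrand $\eta_t:=\frac{1}{1-tf}\,\frac{dx}{x}$ as a meromorphic $n$-form on $(\CC^*)^n$ whose polar locus is exactly the fiber $Y_t=\{1-tf=0\}$ and whose pole is simple. Its Poincar\'e (Leray) residue $\omega_t:=\mathrm{Res}_{Y_t}\eta_t$ is then a holomorphic $(n-1)$-form on $Y_t$, equal up to the constant $-1/t$ to the Gelfand--Leray form of $\frac{dx}{x}$ along $f$; as $t$ varies these fit together into a relative form in $\Omega^{n-1}_{(\CC^*)^n/\Aff^1}$, as required by the statement.

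The heart of the argument is passing from the $n$-dimensional torus integral to an $(n-1)$-dimensional integral over $Y_t$. I would realize $\Gamma$ as the Leray coboundary (tube) $\tau(\Delta_t)$ of a compact $(n-1)$-cycle $\Delta_t\subset Y_t$. Concretely, fixing $x_1,\dots,x_{n-1}$ on their circles and applying the one-variable residue theorem in $x_n$ replaces the circle $\{|x_n|=r_n\}$ by small loops around the roots of $1-tf=0$ inside it; sweeping $x'=(x_1,\dots,x_{n-1})$ over its torus assembles these loops into the tube over a cycle $\Delta_t$ lying in $Y_t$. Leray's residue theorem (equivalently, the one-variable computation just described) then yields $\frac{1}{2\pi i}\int_\Gamma\eta_t=\int_{\Delta_t}\omega_t$, and after absorbing the factor $(2\pi i)^{-(n-1)}$ into the normalization of $\omega_t$ we conclude $\Phi_f=\int_{\Delta_t}\omega_t$.

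The main obstacle is controlling this contour deformation and exhibiting $\Delta_t$ as a genuine compact cycle. When I shrink the $x_n$-circle to capture the residues along $Y_t$, I must ensure that the leftover integral over a small circle about $x_n=0$ tends to $0$ and that the relevant roots of $1-tf$ lie inside the contour. Both are guaranteed by the hypothesis $0\in\inter\Delta_f$ together with the smallness of $t$: since the Newton polytope surrounds the origin, $f$ contains strictly negative powers of each variable, so $f\to\infty$ along every coordinate direction and $\frac{1}{1-tf}$ vanishes to positive order there, killing the boundary term. Organizing the residue loci over the $x'$-torus into a single compact cycle $\Delta_t\subset Y_t$ and checking that it varies in a flat family is the remaining topological point; this family is only locally defined because $\Delta_t$ undergoes monodromy as $t$ encircles $0$, which accounts for the multivaluedness recorded in the statement and is harmless for the period equality on a small disc.
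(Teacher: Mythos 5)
Your proposal is correct, and it is essentially the proof the paper is relying on: the paper itself gives no argument for this statement, presenting it as mathematical folklore with a pointer to \cite[Proposition~2.3]{Prz07}, and that standard argument is exactly yours --- Cauchy's formula plus the geometric series to identify $\Phi_f$ with $(2\pi i)^{-n}\int_\Gamma (1-tf)^{-1}\,\frac{dx_1}{x_1}\wedge\cdots\wedge\frac{dx_n}{x_n}$, followed by the fiberwise one-variable residue computation (with the contribution at $x_n=0$ killed by the hypothesis $0\in\inter\Delta_f$) expressing this integral as a period of the Leray residue form over a cycle in the fiber. The only imprecision is your framing claim that $[\Gamma]$ is literally a tube class $\tau(\Delta_t)$: in $H_n((\CC^*)^n\setminus Y_t)$ this is impossible, since $[\Gamma]$ maps to the nonzero torus class in $H_n((\CC^*)^n)$ while tube classes map to zero under inclusion; rather, $[\Gamma]$ differs from $\tau(\Delta_t)$ by a class (e.g.\ a small torus around $x_n=0$) on which $\eta_t$ integrates to zero, which is precisely what your boundary-term computation establishes, so the period identity is unaffected.
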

\begin{mydefinition}
\label{Definition: weak LG models} Let $X$ be a
smooth 
Fano variety of dimension $n$ and $I^X_{H^0}\in \CC[[t]]$ be the fundamental term
of its regularized $I$-series.
\begin{itemize}
\item A Laurent polynomial $f\in \CC[\ZZ^n]$ is called a \emph{very weak Landau--Ginzburg model} for $X$ if $\Delta_f$ contains the origin in its interior and (up
to some constant shift $f\mapsto f+\alpha$, $\alpha\in \CC$)
$$
\Phi_f=I^X_{H^0}.
$$
\item A Laurent polynomial $f\in \CC[\ZZ^n]$ is called \emph{a weak
Landau--Ginzburg model} for $X$ if it is a very weak
Landau--Ginzburg model for $X$ 
and if it admits a \emph{Calabi--Yau compactification}, i.\,e. there is a fiberwise compactification of a family $f\colon (\CC^*)^n\to \CC$
whose total space is (an open) smooth Calabi--Yau
variety.
\end{itemize}
\end{mydefinition}

\begin{myremark}
By the above theorem, if $f$ is a very weak Landau--Ginzburg model, then $L_X=PF_f$, where $PF_f$ is the
Picard--Fuchs operator for the pencil given by $f$.
\end{myremark}

\begin{conj}[Mirror Symmetry of variations of Hodge structures]
For any Fano variety $X$ there exists a one-parameter family $Y\to \CC$ whose
Picard--Fuchs $\mathcal D$-module is isomorphic to a regularized
quantum $\mathcal D$-module for $X$.
\end{conj}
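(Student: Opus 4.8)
The statement is the motivating conjecture of the theory, and a proof in full generality is out of reach; the realistic plan is to verify it for precisely the Fano varieties treated in this paper, reducing it in each case to a concrete identity of power series. The key reduction is supplied by the Remark following Theorem~\ref{Proposition: Picard--Fuchs}: if $f\in\CC[\ZZ^n]$ is a \emph{very weak} Landau--Ginzburg model for $X$, then the Picard--Fuchs operator $PF_f$ of the associated pencil equals the regularized quantum operator $L_X$ of Definition~\ref{definition:D3}. Since the two $\mathcal D$-modules in question are the cyclic $\mathcal D$-modules $\mathcal D/\mathcal D\cdot L_X$ and $\mathcal D/\mathcal D\cdot PF_f$ generated by these operators, the equality $L_X=PF_f$ immediately yields the required isomorphism of $\mathcal D$-modules. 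Thus it suffices, for each such $X$, to produce a Laurent polynomial $f$ whose Newton polytope contains the origin in its interior and whose constant-terms series satisfies $\Phi_f=I^X_{H^0}$ up to a constant shift $f\mapsto f+\alpha$.

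The construction of the candidate $f$ proceeds through toric degenerations. For each $X$ I would first exhibit a flat degeneration of $X$ to a Gorenstein Fano toric variety $T$ with reflexive moment polytope $\nabla$, and then take $f$ to be a Laurent polynomial supported on $\nabla^*$, so that $\Delta_f=\nabla^*$ contains the origin in its interior. For Fano complete intersections in weighted projective space the degeneration is furnished uniformly by Altmann's construction of toric deformations~\cite{altmann:95a}, as packaged in Theorem~\ref{thm:ci}; for the remaining Picard rank one threefolds I would use the monomial degenerations of~\cite{ilten:11a} together with the small toric degenerations of~\cite{galkin:08a}. The polynomials produced this way are exactly those recorded in Table~\ref{table}.

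It then remains to verify the identity $\Phi_f=I^X_{H^0}$, which I would split into two steps. First, by Theorem~\ref{Proposition: Picard--Fuchs} the series $\Phi_f$ is the main period of the pencil defined by $f$, and for a Laurent polynomial whose Newton polytope is dual to the moment polytope of $T$ one expects, by toric mirror symmetry, that this period coincides with the fundamental term $I^T_{H^0}$ of the regularized $I$-series of $T$; concretely one computes the constant terms $\phi_f(i)$ combinatorially. Second, one matches $I^T_{H^0}$ with $I^X_{H^0}$. Since the regularized $I$-series of the Picard rank one Fano threefolds and of the relevant complete intersections are already computed in~\cite{Prz09}, the verification reduces to checking case by case that the combinatorially computed $\Phi_f$ agrees with the tabulated $I^X_{H^0}$; equivalently, that the numbers $a_{ij}$ extracted from $PF_f$ reproduce the Gromov--Witten invariants feeding into $L_X$.

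The genuine obstacle is the second step, the invariance of the regularized quantum $\mathcal D$-module under the degeneration $X\rightsquigarrow T$. Genus-zero Gromov--Witten invariants are deformation invariant across \emph{smooth} families, but our degenerations land on singular toric varieties, so one cannot simply invoke deformation invariance to conclude $I^T_{H^0}=I^X_{H^0}$. The plan circumvents this by not proving invariance abstractly at all: the $I$-series of $X$ is known independently, so one verifies $\Phi_f=I^X_{H^0}$ by direct computation, making the conjecture a finite, checkable statement for each $X$. What remains genuinely conjectural --- and what keeps the statement a conjecture rather than a theorem --- is the assertion that such an $f$ exists for \emph{every} Fano $X$, together with the structural reason, via toric degeneration, that the constant-terms series of a dual Laurent polynomial should always recover the regularized quantum $\mathcal D$-module.
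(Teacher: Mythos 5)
Your assessment matches the paper exactly: the statement is left as a conjecture, and the paper merely observes that for $\dim X=3$, $\pic (X)=\ZZ$, and $Y=(\CC^*)^3$ it reduces to \cite[Theorem~18]{Prz09}, i.e.\ to the case-by-case verification of $\Phi_f=I^X_{H^0}$ for the polynomials of Table~\ref{table}, with $L_X=PF_f$ then following from Theorem~\ref{Proposition: Picard--Fuchs} --- precisely your reduction via cyclic $\mathcal D$-modules. Your further point that deformation invariance across the singular toric degenerations is never invoked, the identity of series being checked by direct computation instead, likewise reflects the paper's actual logic, in which the degenerations of Theorem~\ref{theorem:Main} are a separate structural result rather than an ingredient in the period verification.
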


Assume that $\dim X=3$, $\pic (X)=\ZZ$, and $Y=(\CC^*)^3$. Then this conjecture
reduces to the following theorem:

\begin{thm}[{\cite[Theorem~18]{Prz09}}]
For any smooth Fano threefold $X$ with Picard number 1 there exists
a  (weak) Landau--Ginzburg model.
\end{thm}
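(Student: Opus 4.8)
The plan is to prove the statement by reducing it to a finite verification, exploiting the complete classification of smooth Fano threefolds of Picard number one. By the results of Iskovskikh and Mori--Mukai (\cite{Isk77}, \cite{Isk78}, \cite{mori:81a}) there are exactly seventeen deformation families, organized by index $r\in\{1,2,3,4\}$ and anticanonical degree; these are precisely the seventeen rows of Table \ref{table}. For each family I would exhibit one explicit Laurent polynomial $f\in\CC[\ZZ^3]$ — the entry in the last column of the table — and verify directly that it is a weak Landau--Ginzburg model in the sense of Definition \ref{Definition: weak LG models}. Since $I^X_{H^0}$ and the operator $L_X$ depend only on the deformation class of $X$, it suffices to treat one representative per family.

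For each candidate $f$, I would carry out two checks. First, that $\Delta_f$ contains the origin in its interior, which is immediate by inspection. Second, and the substantive step, that $\Phi_f=I^X_{H^0}$ up to a constant shift $f\mapsto f+\alpha$. The most efficient route avoids computing either series in closed form: by Theorem \ref{Proposition: Picard--Fuchs}, $\Phi_f$ is the main period of the pencil defined by $f$, hence is annihilated by the Picard--Fuchs operator $PF_f$ of that pencil. Because the regularized quantum operator $L_X$ has the explicit type-D3 shape of Definition \ref{definition:D3}, with coefficients polynomial in the Gromov--Witten numbers $a_{ij}$ of $X$ (known for all seventeen families), the task reduces to showing $PF_f=L_X$. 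One may either compute $PF_f$ from the combinatorics of $f$ and match it against $L_X$, or, equivalently, compute the first several constant terms $\phi_f(i)$ of the powers $f^i$, verify that the normalized series $1+\phi_f(1)t+\dots$ satisfies $L_X$ to the requisite order, and then invoke uniqueness: $I^X_{H^0}$ is the unique analytic solution of $L_XI=0$ of the form $1+a_1t+\dots$, so agreement forces $\Phi_f=I^X_{H^0}$.

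The final ingredient, needed to upgrade a \emph{very weak} model to a \emph{weak} one, is a Calabi--Yau compactification of each pencil. For a Laurent polynomial in three variables the fibers $Y_\lambda$ are surfaces, and I would compactify them inside the toric variety determined by $\Delta_f$, passing to a crepant resolution where necessary so that the total space is smooth. When $\Delta_f$ is reflexive this is Batyrev's anticanonical K3 construction directly; in the remaining cases one arranges the compactification by hand so that the generic fiber is a smooth K3 surface and the total space of the pencil is an open smooth threefold with trivial canonical class. This family-by-family analysis, together with the identities $\Phi_f=I^X_{H^0}$, establishes the theorem.

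The main obstacle is the period identity for the families that are not complete intersections in ordinary projective space — the linear sections of Grassmannians ($X_{10}$, $X_{14}$, $V_5$) and the sporadic varieties $X_{12}$, $X_{16}$, $X_{18}$, $X_{22}$. For these there is no elementary mirror construction handing over $f$, so the honest work lies both in guessing the correct Laurent polynomial and in matching its period to $L_X$; this requires the Gromov--Witten invariants $a_{ij}$ for each such $X$ and a genuinely computational verification that the two differential operators coincide.
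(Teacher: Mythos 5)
Your strategy---reduce to the seventeen classified deformation families, exhibit the explicit Laurent polynomials of Table \ref{table}, identify $\Phi_f$ with $I^X_{H^0}$ by matching the Picard--Fuchs operator $PF_f$ against the type-D3 operator $L_X$ (using the known Gromov--Witten numbers $a_{ij}$ and the uniqueness of the normalized solution), and then produce fiberwise Calabi--Yau (K3) compactifications case by case---is essentially the proof given in the cited reference \cite{Prz09}, which is all the present paper does for this statement: it quotes that result rather than reproving it. The only caution is that your ``check finitely many constant terms and invoke uniqueness'' shortcut is rigorous only when backed by the exact computation of $PF_f$ (your first route), since finite-order agreement alone does not force $\Phi_f$ to be annihilated by $L_X$.
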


There are 17 families of smooth Fano varieties of Picard rank 1, see~\cite{Isk77} and \cite{Isk78}.
In~\cite{Prz07} there is a list of weak Landau--Ginzburg models for all of them (our Table~\ref{table}).
There it is noted that polynomials from the list are \emph{potentially toric}, i.~e. there are in fact no Hilbert polynomial restrictions for degenerating the Fano threefolds to the toric varieties associated to the corresponding weak Landau--Ginzburg models.
In Theorem~\ref{theorem:Main} we prove that weak Landau--Ginzburg models are \emph{toric}, that is these Fano threefolds actually \emph{can} be degenerated to corresponding toric varieties.

\section{Toric Degenerations of Fano Varieties}\label{sec:degen}

\subsection{Complete Intersections in Weighted Projective Spaces}\label{subsec:ci}

Consider a smooth Fano complete intersection $X$ of Cartier divisors of degrees $n_1,\ldots, n_k$ in weighted projective space
$\PP(w_0:\ldots:w_r)$, $w_0\leq w_1\leq\ldots\leq w_r$.
Let $n_0=\sum w_i-\sum n_j$ be its Fano index.
By~\cite[Proposition~7]{Prz10}, $w_0=1$ and there is a partition of $I=\{0\ldots r\}$ into $k+1$ non-intersecting sets $I_0,\ldots,I_k$ such that
$$
\sum_{j\in I_i} w_j=n_i,\ \ \ i=0,\ldots,k,
$$
and $w_0\in I_0$ (the so called \emph{$\QQ$-nef-partition}). Let $w_{i0},\ldots,w_{im_i}$ denote the elements of $I_i$ for $0\leq i \leq k$.
By~\cite[Theorem~9]{Prz10}, there is a Hori--Vafa very weak Landau--Ginzburg model for $X$ defined by
$$
f_X=\frac{(x_{1,0}+\ldots+x_{1,m_i})^{n_1}\cdot\ldots\cdot(x_{k,0}+\ldots+x_{k,m_k})^{n_k}}{\prod  x_{i,j}^{w_{ij}}}+x_{0,1}+\ldots+x_{0,m_0},
$$
where $x_{i,0}$ is just the constant $1$ for $0\leq i \leq m_i$.

\begin{myremark}
\label{remark:good partition}
By~\cite[Proposition~7]{Prz10}, one can choose a partition $I_0,I_1,\ldots,I_k$ such that $w_{0j}=1$ for $0\leq j \leq m_0$. For the remainder of the section, we will assume that the partition has been chosen in this manner. Note that this implies that $n_0=m_0+1$.
\end{myremark}

Since $X$ is a complete intersection, we can degenerate the defining equations to sufficiently general binomials in order to attain a toric degeneration of $X$. However, we would in fact like to attain a toric degeneration to the variety corresponding to our $f_X$; this is the content of the following theorem.

\begin{thm}\label{thm:ci}
	There is a flat degeneration of $X$ to $Z=\widetilde{\PP}(\Delta_{f_X}^*)$.
\end{thm}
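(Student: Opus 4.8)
The plan is to realize the desired degeneration via K.\,Altmann's construction of toric deformations, as advertised in the introduction. The key geometric input is that $X$ sits as a complete intersection in a weighted projective space, and that we wish to degenerate it not to an arbitrary toric variety but specifically to $Z=\widetilde{\PP}(\Delta_{f_X}^*)$, whose fan/polytope data is dictated by the Newton polytope of the explicit Hori--Vafa Laurent polynomial $f_X$. First I would make the combinatorics explicit: using the $\QQ$-nef-partition $I_0,\ldots,I_k$ (normalized as in Remark~\ref{remark:good partition} so that $w_{0j}=1$ and $n_0=m_0+1$), I would write down the Newton polytope $\Delta_{f_X}$ of $f_X$ in $N_\QQ$, identify its vertices, and compute the dual polytope $\Delta_{f_X}^*\subset M_\QQ$. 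This pins down $Z$ concretely and lets me describe the homogeneous coordinate ring / semigroup $\widetilde{S}_{\Delta_{f_X}^*}$ whose $\proj$ is $Z$.

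Next I would set up the deformation. The natural approach is to exhibit $Z$ as a toric variety together with a distinguished one-parameter deformation whose general fiber is $X$; concretely, one writes the binomial toric ideal defining $Z$ in its projective embedding and then perturbs the binomials to the equations cutting out $X$. Altmann's machinery applies to affine toric varieties defined by a cone, and produces deformations parametrized by Minkowski decompositions of a polytopal cross-section of that cone; the perturbation directions correspond to lattice points, and flatness is automatic in his construction. So the technical heart is to present (the relevant affine cone over) $Z$ in a form where the complete-intersection degrees $n_1,\ldots,n_k$ and the weights $w_{ij}$ translate into an admissible Minkowski/deformation datum, and then to check that the generic fiber of the resulting family is isomorphic to the given complete intersection $X$ rather than some other smoothing. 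I would verify the fiber identification by matching defining equations degree-by-degree against the $n_j$-fold and $w_{ij}$-power structure visible in the numerator and denominator of $f_X$.

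I expect the main obstacle to be precisely this compatibility step: ensuring that the deformation furnished by Altmann's construction has \emph{the} complete intersection $X$ (with the correct multidegrees $(n_1,\ldots,n_k)$ in $\PP(w_0:\ldots:w_r)$) as its general fiber, and not merely some toric or non-toric smoothing of $Z$. This requires carefully reconciling two pieces of bookkeeping — the combinatorial data of $\Delta_{f_X}^*$ (dual polytope, semigroup generators) on one side, and the algebraic data of the nef-partition and the power structure of $f_X$ on the other. A secondary subtlety is verifying flatness and that $Z$ is the correct flat limit: since $\widetilde{\PP}$ may only embed in a weighted projective space (so $Z$ need not be normal or the image of the standard $\PP(\nabla)$), I would need to keep track of whether we are working with $S_\nabla$ or $\widetilde{S}_\nabla$ and invoke the criteria from Section~\ref{sec:prelim} (very ampleness, unimodular triangulation) to guarantee the embedding and the flatness behave as needed. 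Once the deformation datum is correctly matched and flatness is in hand, the conclusion that $X$ degenerates flatly to $Z=\widetilde{\PP}(\Delta_{f_X}^*)$ follows directly.
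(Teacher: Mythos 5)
Your proposal reaches for the same tool the paper uses --- Altmann's Minkowski-sum machinery \cite{altmann:95a} applied to a cone presentation of $Z$, with the degeneration realized by perturbing binomial equations to the equations of $X$ --- and you correctly sense that everything turns on reconciling the combinatorics of $\Delta_{f_X}$ with the degrees $n_i$ and weights $w_{ij}$. But that reconciliation is the entire mathematical content of the proof, and your proposal defers it (``the technical heart,'' ``the main obstacle'') rather than carries it out. Concretely, the paper: writes $Z$ as $\proj$ of the semigroup algebra of the cone $\sigma$ over $(\Delta_{f_X},1)$; applies the lattice automorphism $b_{0j}\mapsto b_{0j}-c$ and passes to a Veronese superalgebra (replacing the generator involving $n_0c$ by one involving $c$, which leaves $\proj$ unchanged) so that the cross-section $Q$ of the resulting cone visibly decomposes as a Minkowski sum of the dilated simplices $n_i\Delta_i$, the single lattice point $-\sum_{i,j} w_{ij}b_{ij}$, and the tail cone $\QQ_{\geq 0}\Delta_0$; invokes \cite[Theorem 3.5]{altmann:95a} to embed $Z$ via the binomials $\chi^{n_ic_0^*}-\chi^{c_i^*}$ into the toric variety $X_2$ of the associated Cayley cone $\widehat\sigma$; and finally computes $\widehat\sigma^\vee$ explicitly, finding a smooth simplicial cone whose generators have weights $w_{ij}$, $1$, and $n_i-\sum_{j\geq 1}w_{ij}=w_{i0}$ (this is where the $\QQ$-nef-partition of Remark \ref{remark:good partition} enters), i.e.\ $X_2$ is exactly $\PP(w_0:\ldots:w_r)$ and the binomials have degrees exactly $n_1,\ldots,n_k$. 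Without this chain of reductions and the final dual-cone computation, there is no theorem.

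A second, related point: framing Altmann's result as producing a \emph{deformation} of $Z$ whose general fiber must then be identified with $X$ creates an obstacle that the paper's framing dissolves. The paper uses Altmann only as an \emph{embedding} theorem; once $Z$ is exhibited as a complete intersection of degrees $n_1,\ldots,n_k$ in the same weighted projective space containing $X$, no fiber identification is needed at all --- one simply degenerates the equations of $X$ to the binomials cutting out $Z$, and flatness is automatic since complete intersections of fixed multidegrees have constant Hilbert polynomial. Your proposed check (``matching defining equations degree-by-degree against the power structure of $f_X$'') is not how the argument closes, and as stated it is too strong anyway: the general fiber of such a family is a general complete intersection of those degrees, deformation equivalent to $X$ but not literally isomorphic to it, which is what ``degeneration'' means here. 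Likewise your secondary worry about $S_\nabla$ versus $\widetilde{S}_\nabla$ and very ampleness is moot in the paper's setup, which works throughout with $\proj$ of cone semigroup algebras (this is $\widetilde\PP$ by definition) and never needs a projectively normal embedding or a unimodular triangulation.
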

\begin{proof}
	To prove the theorem, we will show that $Z$ can be embedded as a complete intersection of degrees $n_1,\ldots,n_k$ in $\PP(w_0:\ldots:w_r)$. One way of doing this is by explicitly comparing generators and relations for the $d\cdot n_0$-th antipluricanonical embedding of $Z$ for some $d$ with those for the $n_0$-th Veronese embedding of $\PP(w_0:\ldots:w_r)$, see Example \ref{ex:ci} for a demonstration of this.
	Here we take a more intrinsic approach, avoiding generators and relations as much as possible. We will first perform a coordinate transformation and pass to a Veronese superalgebra to arrive at a more usable description of $Z$. We will then apply a result of K.\,Altmann (see \cite[Theorem 3.5]{altmann:95a}) which relates Minkowski decompositions of polytopes to toric complete intersections, i.e. a toric variety $X_1$ embedded equivariantly as a complete intersection in a second toric variety $X_2$. In our case, $X_1$ will just be our variety $Z$, and $X_2$ will be the desired weighted projective space.

We first describe our variety $Z$.
Consider the lattice
$$
N=\bigoplus_{i=0}^k \ZZ^{m_i}
$$
with basis $b_{ij}$ for  $0\leq i \leq k$ and $1\leq j \leq m_i$; let $M$ be the dual lattice. For any $i$, we set $b_{i0}=0$. Let $\Delta_i=\conv\{b_{ij}\}_{j=0}^{m_i}$ for $i\geq 1$, and set $\Delta_0=\conv\{b_{ij}\}_{j=1}^{m_0}$. Then we have that
$$
\Delta_{f_X}=\conv\left(\sum_{i\geq 1} n_i \Delta_i-\sum_{i \geq 0,j\geq 1} w_{ij}b_{ij},\Delta_0\right).
$$
Set $\sigma=\QQ_{\geq 0}\cdot (\Delta_f,1)$, and let $c$ be the vector $(\underline{0},1)$ in $N\oplus \ZZ$.
Then $Z=\widetilde\PP(\Delta_f^*)$ is just $\proj \CC[\sigma^\vee\cap (M\oplus \ZZ)]$, where the $\ZZ$-grading for $\proj$ is given by $c$.

We now perform a coordinate transformation to bring our description of $Z$ into more usable form.
Consider the lattice automorphism of $N\oplus \ZZ$ sending
\begin{align*}
b_{0j}&\mapsto b_{0j}-c&\qquad 1\leq j \leq m_i& \\
b_{ij}&\mapsto b_{ij}&\qquad i\geq 1, 1\leq j \leq m_i&\\
c&\mapsto c
\end{align*}
This maps sends $\sigma$ to $\sigma'$, where
$$
\sigma'=\QQ_{\geq 0} \cdot \left(\sum_{i\geq 1} n_i\Delta_i-\sum_{i \geq 0,j\geq 1} w_{ij}b_{ij}+n_0c\right)+\QQ_{\geq 0} \Delta_0.
$$
Note that we can replace $\sigma'$ by
$$
\sigma''=\QQ_{\geq 0} \cdot \left(\sum_{i\geq 1} n_i\Delta_i-\sum_{i \geq 0,j\geq 1} w_{ij}b_{ij}+c\right)+\QQ_{\geq 0} \Delta_0
$$
and we still have that $Z\cong \proj \CC[(\sigma'')^\vee\cap (M\oplus \ZZ)]$, where the $\ZZ$-grading for $\proj$ is again given by $c$.
Indeed, in the inclusion
$$ \CC[(\sigma')^\vee\cap (M\oplus \ZZ)]\hookrightarrow \CC[(\sigma'')^\vee\cap (M\oplus \ZZ)]$$
coming from the lattice inclusion $M\oplus\ZZ\hookrightarrow M\oplus \ZZ$ sending $c^*$ to $n_0c^*$, the left hand side is just the $n_0$th Veronese subalgebra of the right hand side. (We denote the elements of the basis of $M\times\ZZ$ dual to $b_{ij}$ and $c$ by respectively $b_{ij}^*$ and $c^*$.)

We now apply the result of Altmann to demonstrate $X_1=Z$ as a complete variety in another toric variety $X_2$. Let $Q$ be the intersection of $\sigma''$ with the hyperplane $[c^*=1]$, viewed via the natural cosection as living in $N_\QQ$. Explicitly, we have
$$Q=\sum_{i\geq 1} n_i\Delta_i-\sum_{i \geq 0,j\geq 1} w_{ij}b_{ij}+\QQ_{\geq 0}\Delta_0.$$
Thus, we have a natural decomposition of $Q$ into a Minkowski sum with summands whose compact parts consist  of the point $-\sum_{i \geq 0,j\geq 1} w_{ij}b_{ij}$ and dilated simplices $n_i\Delta_i$ for $i\geq 1$.
Consider the lattice $\widehat N=N\oplus \ZZ^{k+1}$, where the second component has basis $c_i$ for $0\leq i \leq k$; let $\widehat M$ be the dual lattice. Define the cone $\widehat \sigma\subset \widehat N_\QQ$ to be generated by
\begin{align*}
&\Delta_0,\qquad \Delta_i+c_i,\qquad\qquad 1\leq i \leq k.\\
-&\sum_{i \geq 0,j\geq 1} w_{ij}b_{ij}+c_0.
\end{align*}
By \cite[Theorem 3.5]{altmann:95a} there is a closed embedding
$$\proj  \CC[(\sigma'')^\vee\cap (M\oplus \ZZ)]\hookrightarrow \proj \CC[(\widehat\sigma)^\vee\cap (\widehat M)]=:X_2,$$
where the $\ZZ$-grading for the latter semigroup algebra is given by $\widehat c=c_0+\sum_{i=1}^k n_ic_i$. By the same theorem,
this embedding is given exactly by the equations
\begin{align*}\label{eqn:cieqn}
\chi^{n_ic_0^*}-\chi^{c_i^*}\qquad\qquad 1\leq i \leq k,
\end{align*}
where for $u\in\widehat{M}$, $\chi^u$ denotes the corresponding character.

We now show that $X_2$ is just the desired weighted projective space.
An explicit calculation gives that $(\widehat{\sigma})^\vee$ is generated by the vectors
\begin{align*}
&b_{ij}^*+w_{ij}c_0^*,\qquad\qquad 0\leq i \leq k,\quad 1\leq j \leq m_i,\\
&c_0^*,\\
&c_i^*-\sum_{j=1}^{m_i}(b_{ij}^*-w_{ij}c_0^*),\qquad\qquad 1\leq i \leq k,
\end{align*}
and is thus a smooth simplicial cone, where the generators have respectively weights $w_{ij}$,  $w_{00}=1$, and $w_{i0}=n_i-\sum_{j\geq 1} w_{ij}$ with respect to $\widehat c$.
Thus, $\proj \CC[(\widehat\sigma)^\vee\cap (\widehat M)]$ is the typical description of $\PP(w_{00}:\ldots:w_{km_k})$ and we have embedded $Z$ as a complete intersection of degrees $n_1,\ldots,n_k$. By degenerating the equations defining $X$ in $\PP(w_{00}:\ldots:w_{km_k})$, we get a degeneration of $X$ to $Z$.
\end{proof}

\begin{ex}[The del Pezzo surface of degree 2]\label{ex:ci}
We now consider the example of the del Pezzo surface of degree 2 to hint at an alternate approach to the above theorem via generators and relations. This is a hypersurface of degree 4 in $\PP(1,1,1,2)$. Its weak Landau--Ginzburg model presented above is thus
$$
f_X=\frac{(x+y+1)^4}{xy}.
$$
The corresponding Newton polytope $\Delta_f$ has vertices equal to the columns of the matrix
$$
\left(
  \begin{array}{rrr}
3&-1&-1\\
-1&3&-1\\
  \end{array}
\right).
$$
The dual polytope $\Delta_f^*$ thus has vertices equal to the columns of the matrix
$$
\left(
  \begin{array}{rrr}
	  1 & 0 & -1/2\\
	  0&1&-1/2\\
  \end{array}
\right).
$$
This is not a lattice polytope; in particular $Z=\widetilde\PP(\Delta_f^*)\neq \PP(\Delta_f^*)$. However, its double dilation $\nabla=2\cdot \Delta_f^*$ is in fact very ample.
The integral points of $\nabla$ are $u=(-1,-1)$ and $v_{ab}=(a,b)$ for $a,b\geq 0$, $a+b\leq 2$. These correspond to generators for the homogeneous coordinate ring of $Z$ in this (the doubleanticanonical) embedding.

Affine homogeneous relations among these lattice points correspond to binomial relations in the ideal of $Z$. In this case, these relations are generated by five 2-Veronese type relations
\begin{align*}
	&v_{20}+v_{02}=2v_{11}, &v_{20}+v_{01}=v_{10}+v_{11},\\
	&v_{20}+v_{00}=2v_{10}, &v_{02}+v_{10}=v_{01}+v_{11},\\
	&v_{02}+v_{00}=2v_{01}
\end{align*}
together with the relation
$$
u+v_{11}=2v_{00}.
$$

On the other hand, consider the 2-Veronese embedding of $\{x_0x_1x_2=y_0^4\}\subset \PP(1,1,2,1)$.
In coordinates $z_{02}=x_0^2$, $z_{20}=x_1^2$, $w=x_2$, $z_{00}=y_0^2$,
$z_{11}=x_0x_1$, $z_{01}=x_0y_0$, $z_{10}=x_1y_0$, and this hypersurface is given by the equation
$$
wz_{11}=z_{00}^2
$$
together with five 2-Veronese-type equations
\begin{align*}
	&z_{20}z_{02}=z_{11}^2, &z_{20}z_{01}=z_{10}z_{11},\\
	&z_{20}z_{00}=z_{10}^2, &z_{02}z_{10}=z_{01}z_{11},\\
	&z_{02}z_{00}=z_{01}^2.
\end{align*}
These correspond to the affine homogeneous relations above, so we can in fact realize our $Z$ as the hypersurface $\{x_0x_1x_2=y_0^4\}\subset \PP(1,1,2,1)$.
Thus, by degenerating the equation defining $X$, we get a degeneration of the del Pezzo surface of degree 2 to $Z$.
\end{ex}

\begin{remark}[{cf.~\cite[Remark~10]{Prz10}}]
In some cases (say, in the case of complete intersections in usual projective spaces or when all $m_i$'s are equal to 1) our very
weak Landau--Ginzburg models can be compactified in products of projective spaces (see the proof of Proposition~11 in~\cite{Prz09}). They are families of singular anticanonical hypersurfaces. The singularities of general members of these families are du Val along subspaces of codimension 2 and intersect transversally. Thus they have trivial canonical classes and crepant resolutions which means that they are birational to Calabi--Yau varieties. It follows that these very weak Landau--Ginzburg models are actually weak ones.
We also can prove this in other particular cases we are interested in. However we do not yet know how to prove this in the general case.
\end{remark}

\subsection{Degeneration via a Monomial Ideal}
Consider any projective variety $X\subset \PP^n$ defined by some homogeneous ideal $I\subset S=\CC[x_0,\ldots x_n]$. If $\prec$ is some monomial order for $S$, then there is a flat family degenerating $X$ to $X_\prec=V(\init_\prec (I))$, where $\init_\prec (I)$ is the initial ideal of $I$ with respect to the monomial order $\prec$. This is not of immediate help in finding toric degenerations of $X$, since in general, $X_\prec$ will be highly singular with multiple components and thus cannot be equal to or degenerate to a toric variety.

Instead, the point is to consider toric varieties embedded in $\PP^n$ which also degenerate to $X_\prec$.
Consider such a toric variety $Z$, and let $\mcH$ be the Hilbert scheme of subvarieties of $\PP^n$ with Hilbert polynomial equal to that of $X$. If $X$ corresponds to a sufficiently general point of a component of $\mcH$ and $X_\prec$ lies only on this component, then $X$ must degenerate to $Z$. This is the geometric background for the following theorem; the triangulations which appear correspond to degenerations of toric varieties to certain special monomial ideals with unobstructed deformations.

\begin{thm}[{\cite[Corollary 3.4]{ilten:11a}}]
Consider a three-dimensional reflexive polytope $\nabla$ with $m$ lattice points, $7\leq m \leq 11$, which admits a regular unimodular triangulation with the origin contained in every full-dimensional simplex, and every other vertex having valency $5$ or $6$. Then the Fano threefold $X_{2(m-3)}$ admits a degeneration to $\widetilde\PP(\nabla)=\PP(\nabla)$.
\end{thm}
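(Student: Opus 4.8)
The plan is to run the Hilbert-scheme degeneration argument outlined just above the statement, using the regular unimodular triangulation $\mathcal{T}$ to produce a common monomial specialization of both the toric variety $Z=\PP(\nabla)$ and the smooth Fano $X_{2(m-3)}$. I fix the anticanonical embedding $Z=\PP(\nabla)\hookrightarrow\PP^{m-1}$, which uses precisely the $m$ lattice points of $\nabla$, and first check that the Hilbert polynomial of $Z$ matches that of $X_{2(m-3)}$. Since $\nabla$ is reflexive, the origin is its unique interior lattice point, and since every full-dimensional simplex of $\mathcal{T}$ contains the origin, $\mathcal{T}$ restricts to a unimodular triangulation of $\partial\nabla\cong S^2$ on the $V=m-1$ nonzero lattice points. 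Euler's relation together with $3F=2E$ forces $F=2(m-3)$ boundary triangles, hence $2(m-3)$ unimodular tetrahedra after coning with the origin; thus the normalized volume of $\nabla$ equals $2(m-3)$ and $(-K_Z)^3=2(m-3)$, the anticanonical degree of $X_{2(m-3)}$ in the same $\PP^{m-1}$.

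Because $\mathcal{T}$ is regular and unimodular, a weight vector inducing $\mathcal{T}$ yields a Gr\"obner (toric) degeneration of $Z\subset\PP^{m-1}$ to the Stanley--Reisner scheme $Y=Y_{\mathcal T}$, whose radical squarefree ideal is generated by the non-faces of $\mathcal{T}$. Concretely $Y$ is a reduced, connected union of the $2(m-3)$ coordinate copies of $\PP^3$ indexed by the tetrahedra of $\mathcal{T}$, glued along the coordinate subspaces prescribed by their shared faces. This exhibits $[Z]$ together with its specialization $[Y]$ as points of the Hilbert scheme $\mcH$ of subschemes of $\PP^{m-1}$ with the Hilbert polynomial of $X_{2(m-3)}$.

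The heart of the proof is to show that $\mcH$ is smooth at $[Y]$, so that $[Y]$ lies on a unique irreducible component; this is exactly where the valency hypothesis is used. I would compute the embedded deformations and obstructions of $Y$ combinatorially, localizing at the coordinate points $[e_w]$ associated to the nonzero vertices $w$ of $\mathcal{T}$: the local model there is governed by the link of $w$, a cycle whose length is the valency of $w$. The key point is that a vertex of valency $5$ or $6$ contributes an unobstructed local deformation, whereas vertices of larger valency can contribute nonzero obstructions; granting this, the global obstruction space vanishes and $[Y]$ is a smooth point of $\mcH$. Note that a triangulated $2$-sphere with all valencies in $\{5,6\}$ necessarily has exactly twelve vertices of valency $5$, by the curvature identity $\sum_w\bigl(6-\mathrm{val}(w)\bigr)=6\chi(S^2)=12$, so the hypothesis is a genuine combinatorial restriction. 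I expect this obstruction computation --- verifying that valency $5$ and $6$ keep the relevant cotangent cohomology zero --- to be the main obstacle.

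Finally, smoothness of $\mcH$ at $[Y]$ pins $[Y]$ to a single component $\mcH_0$. Being unobstructed, $Y$ is then smoothable inside $\mcH_0$, and a general member of $\mcH_0$ is a smooth subscheme with the Hilbert polynomial of $X_{2(m-3)}$, hence, by the classification of Fano threefolds of this degree, a copy of $X_{2(m-3)}$; thus $X=X_{2(m-3)}$ is a general point of $\mcH_0$. On the other hand $[Z]$ specializes to $[Y]\in\mcH_0$, and since $[Y]$ lies on no other component, $[Z]\in\mcH_0$ as well. As $\mcH_0$ is irreducible with general member $X$, I can join $[X]$ to $[Z]$ by a curve in $\mcH_0$ and restrict the universal family over it, producing the desired flat degeneration of $X_{2(m-3)}$ to $Z=\PP(\nabla)$.
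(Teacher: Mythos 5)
Your overall strategy --- Gr\"obner-degenerate $Z=\PP(\nabla)$ to the Stanley--Reisner scheme $Y$ of the triangulation, show $\mcH$ is smooth at $[Y]$, and trap $[Z]$ on the component whose general member is the Fano --- is exactly the Hilbert-scheme argument sketched in the paper just before the statement (the paper itself imports the proof from \cite{ilten:11a} rather than giving one), and your degree count and the Sturmfels step are fine. But you have misread the valency hypothesis, and under your reading the theorem is vacuous. The valency is taken in the \emph{three-dimensional} triangulation: since every full-dimensional simplex contains the origin, every nonzero lattice point is joined to the origin by an edge, so its valency equals its valency in the boundary triangulation plus one; the links of the nonzero vertices in $\partial\nabla$ are thus cycles of length $4$ or $5$, not $5$ or $6$. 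Under your reading, your own curvature identity $\sum_w(6-\mathrm{val}(w))=12$ forces at least twelve nonzero lattice points, i.e.\ $m\geq 13$, contradicting $m\leq 11$ --- a contradiction your write-up does not notice. This propagates into the deformation step: the correct local model of $Y$ at a nonzero coordinate point is $\mathbb{A}^1$ times the cone over a $4$- or $5$-cycle, and there is in addition the far worse singularity at the coordinate point of the origin, whose link is the \emph{entire} triangulated sphere (all $2(m-3)$ copies of $\PP^3$ pass through it); your localization sketch ignores that point entirely, and the $T^2$-vanishing itself is only assumed (``granting this''), although it is the technical heart of the cited result.

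The endgame also has a genuine logical gap. ``Being unobstructed, $Y$ is then smoothable'' is a non sequitur: smoothness of $\mcH$ at $[Y]$ says nothing about the general member of the unique component through $[Y]$ being smooth. Worse, even granting a smooth general member, identifying it ``by the classification of Fano threefolds of this degree'' fails: by Riemann--Roch (using $c_1c_2=24$ and Kodaira vanishing), \emph{all} smooth Fano threefolds of a given anticanonical degree have the same anticanonical Hilbert polynomial, so for instance when $m=9$ both the rank-one $X_{12}$ and the bidegree-$(2,2)$ divisor in $\PP^2\times\PP^2$ (also of degree $12$, with $-K$ very ample) lie in the same Hilbert scheme $\mcH$, on different components; the degree alone cannot decide which component contains $[Y]$. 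The missing ingredient --- which is in fact the main theorem of \cite{ilten:11a} --- is a direct proof that the rank-one Fano $X_{2(m-3)}$ itself degenerates to the Stanley--Reisner scheme $Y$, obtained there from Stanley--Reisner degenerations of the Mukai varieties and their linear sections (alternatively one can pin down the component by comparing its dimension with $h^0(N_{X/\PP})$, as the paper does for $X_{18}$ in its Example for that case). Once that is in hand, smoothness of $\mcH$ at $[Y]$ forces $[Z]$ onto the Fano's component, and your concluding pull-back-of-the-universal-family argument goes through.
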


\begin{ex}[$X_{12}$]\label{ex:mon12}
Consider the Laurent polynomial $f$ from Table \ref{table} for the Fano threefold $X_{12}$.  The dual of the Newton polytope $\nabla=\Delta_f^*$ is the convex hull of the vectors $\pm e_1$, $\pm e_2$, $e_3$, $-e_1-e_2$, $e_2+e_3$, and $-e_1-e_2-e_3$, see Figure \ref{fig:v12poly}. $\nabla$ has only one non-simplicial facet, a parallelogram. Subdividing this facet by either one of its diagonals gives a triangulation of $\partial \nabla$, which naturally induces a triangulation of $\nabla$ with the origin contained in every full-dimensional simplex.  It is not difficult to check that this triangulation is in fact regular and unimodular; furthermore, all vertices (with the exception of the origin) have valency $5$ or $6$. Thus, by the above theorem, $X_{12}$ degenerates to $\widetilde\PP(\nabla)$.
\end{ex}
\begin{figure}
\begin{center}
\includegraphics{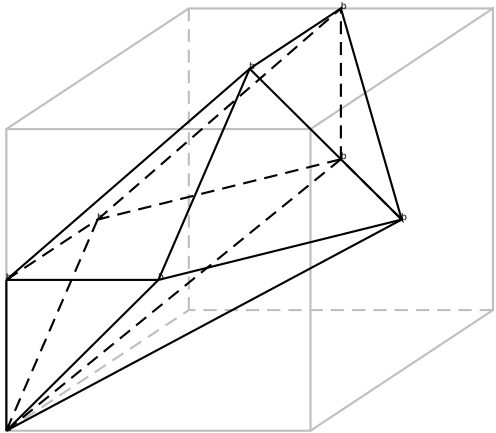}
\end{center}
\caption{$\Delta_f^*$ for $X_{12}$}\label{fig:v12poly}
\end{figure}

\begin{ex}[$X_{8}$, $X_{10}$, $X_{14}$, and $X_{16}$]\label{ex:mongen}
Consider the Laurent polynomial $f$ from Table \ref{table} for $X_d$, $d\in\{8,10,14,16\}$.  Similar to the above example for $d=12$,
one can check, either by hand or with the computer program TOPCOM \cite{TOPCOM}, that the polytope
$\Delta_f^*$ satisfies the conditions of the above theorem. Thus, there is a degeneration of
$X_d$ to the toric variety $\widetilde{\PP}(\Delta_f^*)$ corresponding to the Landau--Ginzburg
model given by $f$.
\end{ex}
\begin{ex}[$X_{18}$]\label{ex:v18}
	Consider the Laurent polynomial $f$ from Table \ref{table} for $X_{18}$. Here, $\nabla=\Delta_f^*$ has $12$ lattice points, so we cannot apply the above theorem, but similar techniques may be used to show the existence of the desired degeneration of $X_{18}$. Indeed, the dimension of the component $U$ corresponding to $X_{18}$ in the Hilbert scheme $\mathcal{H}_{X_{18}}$ of its anticanonical embedding is $153$, see \cite[Proposition 3.1]{ilten:12a}. The variety $Z=\widetilde{\PP}(\Delta_f^*)$ corresponds to a point $[Z]$ in $\mathcal{H}_{X_{18}}$ since its Hilbert polynomial agrees with that of $X_{18}$. A standard deformation-theoretic calculation using \cite{ilten:11d} shows that $[Z]$ is a smooth point on a component of dimension 153. It remains to be shown that this component is in fact $U$.

	Now, the boundary of $\nabla$ admits a triangulation such that one vertex has valency $6$, and every other vertex has valency $4$ or $5$. This triangulation is in fact induced by a regular unimodular triangulation of $\nabla$. In any case, $Z$ degenerates to the Stanley--Reisner scheme $Y$ corresponding to this triangulation, and $X_{18}$ does as well, see \cite[Corollary 3.3]{ilten:11a}. Furthermore, a standard deformation-theoretic calculation using \cite{ilten:11d} shows that at the point $[Y]$,
	$\mathcal{H}_{X_{18}}$ only has one $153$-dimensional component. Thus, $[Z]$ must lie on $U$, and $X_{18}$ must degenerate to $Z$.
\end{ex}

\section{The First Main Theorem}\label{sec:main}
We restate our first main theorem from the introduction:
\begin{thm}[First Main Theorem]
\label{theorem:Main}
Each smooth Fano threefold of Picard rank 1 has a toric weak Landau--Ginzburg model. 
More precisely, the Laurent polynomials in Table \ref{table} are weak Landau--Ginzburg models for corresponding Fano varieties, and, for each polynomial $f$ in the table, the corresponding Fano degenerates to $\widetilde\PP(\Delta_f^*)$.
\end{thm}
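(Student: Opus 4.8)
The plan is to dispatch the two assertions separately. The first---that every $f$ in Table~\ref{table} is a weak Landau--Ginzburg model for the indicated $X$---is not new: it is the content of~\cite[Theorem~18]{Prz09} together with the compactification results of loc.\ cit., so I would simply invoke that and concentrate all the effort on the degeneration statement. For the second assertion I would run through the seventeen deformation families one at a time, sorting them according to which of the three machines developed in Section~\ref{sec:degen} applies, and in each case checking that the toric variety produced is genuinely $\widetilde\PP(\Delta_f^*)$ for the tabulated $f$.

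The cleanest group consists of the Fano threefolds that are smooth complete intersections of Cartier divisors in a weighted projective space: the sextic double solid, the quartic, the $(2,3)$- and $(2,2,2)$-complete intersections, the del Pezzo threefolds $V_1,\dots,V_4$ (realized as weighted hypersurfaces or as intersections of quadrics), the quadric, and $\PP^3$ itself. For each I would extract the $\QQ$-nef-partition of~\cite{Prz10}, write down the associated Hori--Vafa polynomial $f_X$, and apply Theorem~\ref{thm:ci} to obtain a flat degeneration of $X$ to $\widetilde\PP(\Delta_{f_X}^*)$. The one genuine thing to verify here is that $\Delta_{f_X}$ is lattice-equivalent to the Newton polytope $\Delta_f$ of the table entry, so that the target really is $\widetilde\PP(\Delta_f^*)$; this is a finite per-case comparison, typically realized by a monomial change of variables identifying the two Laurent polynomials.

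The remaining families come from Grassmannian-type constructions, where no such weighted-complete-intersection description is available. For $X_{10}$, $X_{12}$, $X_{14}$ and $X_{16}$ (and, redundantly with the previous group, $X_8$) I would use the monomial-degeneration technique, Corollary~3.4 of~\cite{ilten:11a}, exactly as in Examples~\ref{ex:mon12} and~\ref{ex:mongen}: check that $\Delta_f^*$ is a three-dimensional reflexive polytope with between $7$ and $11$ lattice points admitting a regular unimodular triangulation whose non-origin vertices have valency $5$ or $6$, a finite combinatorial verification done by hand or with TOPCOM. The polytope for $X_{18}$ has twelve lattice points and so lies just outside that hypothesis; here I would argue as in Example~\ref{ex:v18}, computing the component $U$ of the Hilbert scheme attached to $X_{18}$, showing by deformation theory that $[Z]$ is a smooth point of a $153$-dimensional component, degenerating further to the Stanley--Reisner scheme of a suitable triangulation, and checking that a unique $153$-dimensional component passes through that point. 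Finally, the two genuinely exceptional cases $X_{22}$ and $V_5$ I would handle by importing the small toric degenerations of~\cite{galkin:08a} and checking that Galkin's toric target coincides with $\widetilde\PP(\Delta_f^*)$.

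I expect the main obstacle to be precisely the cases that escape the two clean theorems, namely $X_{18}$, $X_{22}$ and $V_5$. For $X_{18}$ the difficulty is purely deformation-theoretic: having the correct Hilbert polynomial places $[Z]$ in the Hilbert scheme but does not by itself put it on the same component as $X$, and establishing this requires the component count at the Stanley--Reisner point rather than an appeal to a ready-made statement. For $X_{22}$ and $V_5$ the obstacle is that our own techniques yield nothing, so the argument rests on correctly importing and reinterpreting external degenerations. The complete-intersection and monomial cases, by contrast, I expect to be essentially bookkeeping once the two underlying theorems are in hand, the only real care being the lattice-equivalence verification $\Delta_{f_X}\cong\Delta_f$.
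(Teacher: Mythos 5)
Your proposal follows essentially the same route as the paper's proof: the weak Landau--Ginzburg assertion is delegated to \cite{Prz09}, the complete-intersection cases (numbers 1--4, 11--14, 16) to Theorem \ref{thm:ci}, cases 5--8 to the monomial-degeneration Examples \ref{ex:mon12} and \ref{ex:mongen}, $X_{18}$ to the Hilbert-scheme argument of Example \ref{ex:v18}, and $X_{22}$, $V_5$ to the small toric degenerations of \cite{galkin:08a}. The only cosmetic differences are that you run $\PP^3$ through Theorem \ref{thm:ci} (trivially) where the paper simply notes it is already toric, and that your explicit lattice-equivalence check $\Delta_{f_X}\cong\Delta_f$ is handled implicitly in the paper, since the tabulated polynomials literally coincide with the Hori--Vafa polynomials for suitable orderings of the weights.
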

\begin{proof}
The existence of the toric degenerations follows from the methods of the previous section, and previously known small toric degenerations, \cite{galkin:08a}. Numbers 1--4, 11--14, and 16 follow from Theorem \ref{thm:ci}. Indeed, recall that the double solids are hypersurfaces in weighted projective spaces:
number 1 in $\PP(1,1,1,1,3)$ of degree 6, number 11 in  $\PP(1,1,1,2,3)$ of degree 6, and number 12 in $\PP(1,1,1,1,2)$ of degree 4. Numbers 10 and 15 admit small toric degenerations. Numbers 5--8 are dealt with in Examples \ref{ex:mon12} and \ref{ex:mongen}. Number 9 is covered by Example \ref{ex:v18}. Finally, the Fano variety number 17 is already toric.

The statement that the Laurent polynomials appearing in  Table \ref{table} are weak Landau--Ginzburg models was already shown in \cite{Prz09}.
\end{proof}

\section{Geometry of Compactified Fibers of the Landau--Ginzburg Potentials}\label{sec:picrk}
Mirror symmetry predicts more about the fibers of a Landau--Ginzburg potential than the fact that they compactify to Calabi--Yau varieties.  In particular, for the cases studied in this paper, the Picard lattices of the compactified fibers should have rank 19.  In the following, we verify this claim:

\begin{thm}[Second Main Theorem] \label{appendixthm} Let $X$ be a Fano threefold of Picard number one, and $f$ the Laurent polynomial for $X$ in Table 1.  Then the fibers of $f$ compactify to a family of K3 surfaces of Picard rank 19.  \end{thm}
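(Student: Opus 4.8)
The plan is to recast the claim as a statement about the rank of the transcendental lattice and to read that rank off the order-$3$ Picard--Fuchs operator $L_X$. Write $S_t$ for a generic compactified fiber; by \cite{Prz09} it is a K3 surface, so $H^2(S_t,\ZZ)$ has rank $22$ and splits rationally as $\mathrm{NS}(S_t)\oplus^\perp T_t$ with $T_t$ the transcendental lattice. Since $\pic S_t=\mathrm{NS}(S_t)$ for a K3, the assertion $\rho(S_t)=19$ is equivalent to $\rk T_t=3$. The link to $L_X$ is that the holomorphic $2$-form $\omega_t$ lies in $H^{2,0}$ and hence pairs to zero with every algebraic class; consequently the period functions $\gamma\mapsto\int_\gamma\omega_t$ vanish on $\mathrm{NS}(S_t)$ and factor through $T_t$, so the dimension of the $\CC$-span of all such periods equals $\rk T_t$. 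The distinguished period $\Phi_f=\int_{\Delta_t}\omega_t$ satisfies $L_X=PF_f$ (the remark following Theorem \ref{Proposition: Picard--Fuchs}), and $L_X$ has order $3$ by Definition \ref{definition:D3}. Thus the whole problem reduces to showing that this order-$3$ operator pins the transcendental rank to exactly $3$.

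First I would establish the upper bound $\rho(S_t)\le 19$, i.e. $\rk T_t\ge 3$. The point $t=0$ carries non-trivial, in fact maximally unipotent, monodromy, as witnessed by the holomorphic solution $\Phi_f=1+a_1t+\cdots$; hence the period map of the family is non-constant. A K3 family with non-constant period map cannot have generic Picard rank $20$, because a transcendental lattice of rank $2$ carries a rigid, CM-type Hodge structure that admits no variation (the associated period domain is zero-dimensional). Therefore $\rk T_t\ge 3$.

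Next I would establish the lower bound $\rho(S_t)\ge 19$, i.e. $\rk T_t\le 3$. Since $\Phi_f=1+\cdots\neq 0$, the cycle $\Delta_t$ has non-zero image in $T_t$. Every analytic continuation of a solution of $L_X$ is again a solution, so all periods in the monodromy orbit of $\Phi_f$ satisfy the same order-$3$ operator and their $\CC$-span has dimension at most $3$. Provided the monodromy acts irreducibly on $T_t$ (equivalently, that $L_X$ is irreducible), the orbit of $\Delta_t$ spans $T_t\otimes\QQ$, whence the span of the corresponding periods has dimension $\rk T_t\le 3$. Combining the two bounds gives $\rk T_t=3$ and $\rho(S_t)=19$.

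The main obstacle is precisely the irreducibility of $L_X$, equivalently of the transcendental monodromy: without it the order-$3$ operator might capture only a proper sub-variation and leave extra transcendental classes undetected, breaking the lower bound. I would address this either uniformly, by invoking that the type-D$3$ operators attached to these Fano threefolds are the minimal quantum differential operators and are known to be irreducible, or, absent a clean uniform statement, case by case from the explicit operators read off the Laurent polynomials in Table \ref{table}. As an alternative route to $\rho\ge 19$ that sidesteps irreducibility, one can instead exhibit $19$ independent algebraic classes directly: on an MPCP resolution of the compactifying toric Fano $\widetilde\PP(\Delta_f^*)$ one counts the toric divisors together with their intersections with $S_t$ via Batyrev's formula for anticanonical K3 hypersurfaces, or one produces an elliptic fibration on $S_t$ and applies Shioda--Tate. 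Either variant is concrete but must be verified for all $17$ polytopes of Table \ref{table}.
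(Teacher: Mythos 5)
Your proposal reduces the theorem to a statement you do not prove: the irreducibility of the monodromy representation on the transcendental lattice (equivalently, that the order-$3$ operator $L_X$ is the \emph{full} Picard--Fuchs operator of the compactified family, not merely an operator annihilating the one distinguished period $\Phi_f$). This is precisely the crux of the lower bound $\rho\ge 19$: without it, the monodromy orbit of $[\Delta_t]$ may span a proper invariant subspace of $T_t\otimes\QQ$, and the order-$3$ equation then says nothing about $\rk T_t$. You name this obstacle honestly, but your two ways around it are not proofs: the uniform claim that type-D3 operators ``are known to be irreducible'' is asserted without a source (and deriving it from Golyshev's modularity theorem would invert the logical order the paper intends, since that relationship is what Remark at the end of Section \ref{cases} seeks to explain geometrically), while the case-by-case fallback --- exhibiting $19$ independent algebraic classes via toric divisors or elliptic fibrations with Shioda--Tate --- is exactly the content of the paper's actual proof and is left unexecuted in your proposal. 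There is also a secondary gap: the identification ``dimension of the $\CC$-span of the periods $=\rk T_t$'' is not automatic; the kernel of $\gamma\mapsto\int_\gamma\omega_t$ on flat sections is a monodromy-invariant subspace, so killing it again requires irreducibility (over $\CC$, not just $\QQ$) or a separate Lefschetz-$(1,1)$ argument for rational classes.

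For comparison, the paper states explicitly that no systematic proof is known and proceeds case by case (Section \ref{cases}), using four concrete methods: compactifying fibers to quartics in $\PP^3$ with ordinary double points and writing down enough lines, conics and exceptional curves to span rank $19$; or producing explicit elliptic fibrations with section (from a line on a quartic, from a projection in a product of projective spaces, or torically from a vector $m$ as in Section \ref{sec:back}) and applying Proposition \ref{fibprop} to lattices such as $U\oplus E_7\oplus D_{10}$ or $U\oplus E_6\oplus A_{11}$. In other words, the paper does by explicit geometry, seventeen times, the work that your argument compresses into the unproven irreducibility hypothesis. Your framework would be a genuine improvement if that hypothesis were established uniformly, but as it stands the proposal relocates the difficulty rather than resolving it.
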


At present, we know of no systematic proof of this theorem.  Instead, the proof will be done case by case in Section \ref{cases}.

\begin{remark}
\label{remark:uniqueness}
The rank of the Picard lattice is an important but rather rough invariant.  Actually computing the Picard lattices in each case is beyond the scope of this appendix but will give even more confirmation that the Landau--Ginzburg models given in this paper are correct mirrors.  \cite{Dol} gives a prescription for mirror symmetry for families of lattice-polarized K3 surfaces.  Anticanonical K3 surfaces in a Fano variety $X$ carry a natural lattice polarization induced from the polarization of $X$.  One would expect the generic fiber of a Landau--Ginzburg model for $X$ to be have the mirror-lattice polarization to the anticanonical family of $X$.  A forthcoming paper \cite{DKLP} will verify this expectation explicitly.  Note that the moduli space of K3 surfaces with a lattice polarization by a fixed rank 19 lattice is one-dimensional.  Hence all Landau--Ginzburg models for $X$ with the same lattice polarization will be birational (differ by flops).

Computing the Picard lattices will also show that the fiber of the Landau--Ginzburg models carry Shioda--Inose structures (see e.g. \cite{CD1}, \cite{CDLW}), which gives an explicit geometry correspondence between the K3 surfaces and product of elliptic curves with an isogeny.  This correspondence gives an explanation of the relationship observed in  \cite{Golyshev2005} that the quantum $ \scr D$-modules for Fano threefolds of Picard number one are related to modular forms.  \end{remark}

\subsection{Notation and Background}\label{sec:back}
As before, $N$ and $M$ will denote two dual lattices, where we now concentrate on the case of rank three.  Let $ f_i $ denote the Laurent polynomial defining the Landau--Ginzburg model in row $i$ of Table 1,  $ \Delta_i^* \subset  M_\bb{Q} $ its Newton polytope, and $ \Delta_i \subset  N_\bb{Q} $ its dual polytope.  Note that the roles of $M$ and $N$ have reversed from earlier in the paper, where Newton polygons were taken in $ N_{\bb Q} $.  This change is indicative of the fact that we are now working on the other side of mirror symmetry ($B$-model as opposed to $ A$-model).

We will write $ \langle r \rangle $ for a one-dimensional lattice generated by an element of square $r$.  $ A_n, D_n, E_n $ will denote the {\em negative-definite} root lattices of the corresponding Dynkin diagrams. $U$ will denote the rank-two hyperbolic lattice with intersection matrix
$$ \left( \begin{array}{cc} 0 & 1 \\ 1 & 0 \end{array} \right) .$$

We will use $ [x,y,z,w] $ as homogeneous coordinates on $ \bb P^3 $.  For distinct, non-empty subsets $ I, J, K \subset \{ 1, 2, 3, 4 \} $, we will write $H_I $ for the hyperplane defined by setting the sum of coordinates in $I$ equal to zero --- thus, for example, $H_{\{ 1\}} $ is the coordinate hyperplane $ x = 0 $, while $H_{\{2,4\}} $ is the hyperplane defined by $ y+w=0$.  We write $ L_{I,J} = H_I \cap H_J $, and $ p_{I,J,K} = H_I \cap H_J \cap H_K $.

In many cases, we will compactify the fibers of $ f_i $ to quartics in $ \bb P^3 $ with only ordinary double point singularities.  In those cases, we will identify some curves on the minimal resolutions of these singular quartics (which will be K3 surfaces) and give a heuristic argument for why the curves identified generate a lattice of rank 19.  When the exceptional locus consists of 18 curves, this heuristic argument is actually valid; in other cases, the actual proof consists of blowing up the singularities to compute the intersection matrix of the identified curves, then checking that this matrix has rank 19.  In the interest of not boring the reader to death, we will omit the details of these computations.  In other cases, we will use an elliptic fibration as described below.

\begin{defn} An {\em elliptic K3 surface with section} is a triple $ (X, \pi, \sigma) $ where $X$ is a K3 surface, and $ \pi: X \to \bb{P}^1 $ and $ \sigma: \bb{P}^1 \to X $ are morphisms with the generic fiber of $ \pi $ an elliptic curve and $ \pi \circ \sigma = \mathrm{id}_{\bb P^1} $. \end{defn}

Any elliptic curve over the complex numbers can be realized as a smooth cubic curve in $ \bb{P}^2 $ in {\em Weierstrass normal form}
\begin{equation} \label{Weierstrass} y^2 z = 4x^3 -g_2 x z^2 - g_3 z^3 \end{equation}
Conversely, the equation \eqref{Weierstrass} defines a smooth elliptic curve provided $ \Delta = g_2^3 -27g_3^2 \neq 0 $.

Similarly, an elliptic K3 surface with section can be embedded into the $ \bb{P}^2 $ bundle $ \bb{P}(\scr O_{\bb P^1} \oplus \scr O_{\bb P^1}(4) \oplus \scr O_{\bb P^1}(6)) $ as a subvariety defined by \eqref{Weierstrass}, where now $ g_2, g_3 $ are global sections of $ \scr O_{\bb{P}^1}(8) $, $ \scr O_{\bb P^1}(12) $ respectively (i.e. they are homogeneous polynomials of degrees 8 and 12).  The singular fibers of $ \pi $ are the roots of the degree 24 homogeneous polynomial $ \Delta = g_2^3 -27g_3^2 \in H^0(\scr O_{\bb P^1}(24)) $.  Tate's algorithm can be used to determine the type of singular fiber over a root $ p$ of $ \Delta $ from the orders of vanishing of $ g_2$, $g_3$, and $\Delta $ at $p$.

\begin{prop} \label{fibprop} \cite[Lemma 3.9]{CD1} A general fiber of $ \pi $ and the image of $ \sigma $ span a copy of $U$ in $ \mathrm{Pic}(X) $.  Further, the components of the singular fibers of $ \pi$ that do not intersect $ \sigma $ span a sublattice $ S$ of $ \mathrm{Pic}(X) $ orthogonal to this $U$,  and $ \mathrm{Pic}(X)/(U \oplus S) $ is isomorphic to the Mordell--Weil group $MW(X,\pi)$ of sections of $ \pi $.  \end{prop}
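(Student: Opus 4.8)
The plan is to establish the three assertions in turn, using only standard intersection theory on the K3 surface $X$ together with the geometry of the generic fibre. Write $F$ for the class of a general fibre of $\pi$ and $O=\sigma(\PP^1)$ for the image of the section. First I would record the relevant intersection numbers: distinct fibres are algebraically equivalent and disjoint, so $F^2=0$; since $\sigma$ is a section it meets each fibre in one point, so $F\cdot O=1$; and since $O\cong\PP^1$ is a smooth rational curve on a K3 surface, adjunction gives $O^2=-2$. Passing to the basis $\{F,\,O+F\}$, one computes $(O+F)^2=O^2+2F\cdot O+F^2=0$ and $F\cdot(O+F)=1$, so the Gram matrix becomes $\left(\begin{smallmatrix}0&1\\1&0\end{smallmatrix}\right)$. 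Hence $F$ and $O$ span a copy of $U$; as $U$ is unimodular, this copy is even an orthogonal direct summand of $\mathrm{Pic}(X)$.

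For the second assertion, let $\Theta_{v,i}$ denote the components of the singular fibre over $v$ that do not meet $\sigma$. Each $\Theta_{v,i}$ lies in a single fibre, so it is disjoint from a general fibre and $\Theta_{v,i}\cdot F=0$; and by hypothesis $\Theta_{v,i}\cdot O=0$. Thus every such component is orthogonal to $\langle F,O\rangle=U$, so the sublattice $S$ they span lies in $U^{\perp}$. I would also record here the standard fact that $S$ is negative definite: the components of a single Kodaira fibre span a negative semi-definite lattice whose radical is generated by the fibre class, so deleting the unique component meeting $O$ leaves a negative definite lattice, and distinct fibres are mutually orthogonal.

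The heart of the matter is the third assertion. The plan is to restrict to the generic fibre $X_\eta$, a smooth genus-one curve over $K=\CC(\PP^1)$ with $K$-rational point $O_\eta$, so that $(X_\eta,O_\eta)$ is an elliptic curve with $MW(X,\pi)=X_\eta(K)$. Restriction of divisors gives a homomorphism $r\colon\mathrm{Pic}(X)\to\mathrm{Pic}(X_\eta)$ which is surjective (each closed point of $X_\eta$ restricts the horizontal curve obtained as its closure) and whose kernel is exactly the group $V$ of vertical classes, i.e. those supported on fibres. Using that the component $\Theta_{v,0}$ meeting $O$ occurs with multiplicity one in $F$, one writes $\Theta_{v,0}=F-\sum_{i\ge1}\mu_{v,i}\Theta_{v,i}$, so that $V=\ZZ F\oplus S$. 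Since $U$ is nondegenerate and $S\subseteq U^{\perp}$ we have $U\cap S=0$, whence $U\oplus S=\ZZ O+V$, and therefore $\mathrm{Pic}(X)/(U\oplus S)$ equals $\mathrm{Pic}(X_\eta)/\ZZ[O_\eta]$. Finally the degree map together with Abel--Jacobi identifies $\mathrm{Pic}(X_\eta)/\ZZ[O_\eta]$ with $\mathrm{Pic}^0(X_\eta)\cong X_\eta(K)=MW(X,\pi)$.

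The main obstacle, and the step I would treat most carefully, is the exactness of the sequence $0\to V\to\mathrm{Pic}(X)\xrightarrow{\,r\,}\mathrm{Pic}(X_\eta)\to 0$: surjectivity requires that closures of points of $X_\eta$ spread out to honest divisors on $X$, and the identification $\ker r=V$ requires that a divisor restricting to zero on $X_\eta$ becomes principal after discarding finitely many fibres, hence differs from a vertical divisor by a principal one. The other delicate input is that the zero section meets each fibre in its multiplicity-one component, which is precisely what makes $V=\ZZ F\oplus S$ and drives the clean computation of the quotient. Both facts belong to the standard structure theory of elliptic surfaces and are exactly what \cite[Lemma 3.9]{CD1} packages.
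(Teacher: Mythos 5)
Your proposal is correct, but note that the paper itself contains no proof of Proposition \ref{fibprop}: the statement is simply quoted from \cite[Lemma 3.9]{CD1}, i.e.\ it is the classical Shioda--Tate theorem for elliptic surfaces with section, used as a black box. What you have written is in effect a correct, self-contained proof of that cited lemma, and it is the standard one: the intersection numbers $F^2=0$, $F\cdot O=1$, $O^2=-2$ (adjunction for a smooth rational curve on a K3, where $K_X=0$) give the Gram matrix of $U$ after the basis change to $\{F,\,O+F\}$; the non-identity fiber components are orthogonal to both $F$ and $O$, and Zariski's lemma gives negative definiteness of $S$ (not needed for the statement, but harmless); and the quotient computation is the Shioda--Tate argument via the exact sequence $0\to V\to\mathrm{Pic}(X)\to\mathrm{Pic}(X_\eta)\to 0$, the decomposition $V=\ZZ F\oplus S$ (which correctly uses that the section meets each fiber in a multiplicity-one component), the identity $U\oplus S=\ZZ O+V$, and the Abel--Jacobi identification $\mathrm{Pic}(X_\eta)/\ZZ[O_\eta]\cong\mathrm{Pic}^0(X_\eta)\cong X_\eta(K)=MW(X,\pi)$. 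The two steps you single out as delicate --- exactness of the restriction sequence and the multiplicity-one property of the identity component --- are precisely the nontrivial inputs, and your sketches of both are accurate. The trade-off is the expected one: the paper's citation buys brevity and defers to \cite{CD1}, where the lemma is packaged in the lattice-polarization language used later (e.g.\ the $U\oplus E_7\oplus D_{10}$ polarizations in Section \ref{cases}), while your argument makes the proposition independent of that reference at the cost of a page of standard elliptic-surface theory.
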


When K3 surfaces are realized as hypersurfaces in toric varieties, one can construct elliptic fibrations combinatorially.  As before, let $ \Delta \subset N_\bb{Q} $ be a reflexive polytope, and suppose $ P \subset N_{\bb Q} $ is a plane such that $ \Delta \cap P $ is a reflexive polygon $ \nabla $.  Let $ m \in M $ be a normal vector to $P$.  Then $P$ induces a torus-invariant rational map $ \pi_m:  \bb{P}(\Delta^*) \dashrightarrow \bb P^1 $  with generic fiber $ \bb P_{\nabla} $.  (This is just the Chow quotient of $ \bb{P}(\Delta^*) $ by the torus $ \bb{C}^* \otimes (m^\perp) $.) Restricting $ \pi_m $ to an anticanonical K3 surface and resolving indeterminacy, we get an elliptic fibration.  If $ \nabla  $ has an edge without interior points, this fibration will have a section as well.  See \cite[\S 3]{KreuzerSkarke} for more details.

\subsection{Picard Lattice Data for Fibers of the Landau--Ginzburg Models} \label{cases}
We now prove Theorem \ref{appendixthm} case-by-case, using one of four methods in each case:
\begin{enumerate}[{Method} 1:]
\item \label{quarticmethod} Compactify fibers of $ f_i$ to quartics with ordinary double points in $ \bb P^3$ and explicitly identify curves and singularities such that the strict transforms of the identified curves and the exceptional curves of the resolution of singularities generate a rank 19 lattice.
\item \label{linemethod} Compactify fibers of $ f_i $ to quartics in $ \bb P^3 $.  Identify a line $ \ell $ on the fibers.  Subtract $ \ell $ from the pencil of hyperplane sections containing $ \ell $ to obtain a pencil of plane cubics on the fibers.  Blowing up base points and resolving singularities gives an elliptic surface birational to the original fiber.  The pencils chosen in this paper will always have a base point, and an exceptional curve over a base point gives a section.
\item \label{productmethod} Compactify fibers of $ f_i $ in a product of weighted projective spaces and use an elliptic fibration given by an explicit map to $ \bb P^1 $.
\item \label{toricmethod} Compactify fibers of $ f_i $ in $ \bb P(\Delta^*_{f_i}) $ and specify a vector $ m$ that defines an elliptic fibration.
\end{enumerate}

\begin{enumerate}
\item
As shown in \cite[Remark 19]{Prz09}, this family compactifies to K3 surfaces mirror to $ \bb{WP}(1,1,1,3) $.  Explicitly, the form for the K3 fibers in \cite{Prz09} is
\begin{equation*} y_1 y_2 y_3 y_4^3 = \lambda, \: y_1+y_2 + y_3+ y_4 = 1 \end{equation*}
We make a different change of variable than the one that yields $ f_1 $, namely set $ x = y_1$, $ y = y_2 $, $ z = y_4 $.  Then the equation above reduces to
\begin{equation*} \tilde{f}_1 = x + y + z + \frac{\lambda}{xyz^3} - 1 = 0 \end{equation*}
We now use Method \ref{toricmethod} on $ \tilde{f}_1 $ with $ m= (1,0,1)$,  which gives a polarization by $ U \oplus E_7 \oplus D_{10} $.

\item
Using Method \ref{quarticmethod} gives a quartic with six $ A_3 $ singular points.  There are also lines $ L_{ \{ i \}, \{ 1,2,3,4 \}} $ for $ 1 \leq i \leq 4 $, each equal as a divisor to one-fourth hyperplane section.  Taking the minimal resolution of these quartics gives K3 surfaces, with the exceptional locus and the strict transform of one of these lines generating a rank 19 lattice in the Picard group.

Alternately, using Method \ref{linemethod} with $ \ell $ as any of the four lines above gives a polarization of the K3 surfaces by $ U \oplus E_6 \oplus A_{11} $.

\item
Compactify the fibers of $ f_3 $ as a family of anticanonical divisors in $ \bb{P}^1 \times \bb{P}^2 $ via $ (x,y,z) \mapsto ([x,1] \times [y,z,1])$. Explicitly, $ f_3^{-1}(\lambda) $ compactifies to the K3 surface
\begin{equation*} Y_\lambda = \{ ([x, x_0],[y,z,w]) \in \bb P^1 \times \bb P^2 \: | \:  (x+x_0)^2(y+z+w)^3 - \lambda x x_0 y z w = 0 \}. \end{equation*}
The projection $ \bb{P}^1 \times \bb{P}^2 \to \bb{P}^1 $ induces an elliptic fibration on $ Y_\lambda $ for generic $ \lambda $.  The map $ [x, x_0] \mapsto ( [x, x_0] , [1,-1,0]) $ gives a section of this elliptic fibration.  Putting the fiber over $ [1,a] $ into Weierstrass form
\begin{equation*} \frac{a^3 \lambda^3(24(1+a)^2-a\lambda)}{48} X -\frac{a^4 \lambda^4 (36 (1 + a)^2 (6 (1 + a)^2 - a s) + a^2 s^2)}{864}+X^3+Y^2 = 0 \end{equation*}
and using Tate's algorithm, we see singular fibers of Kodaira type $IV^* $ at $ a =0, \infty $; $ I_6 $ at $ a = -1 $; and $ I_1 $ where $ 27(a+1)^2 - \lambda a = 0 $.  Hence the rank 19 lattice $U \oplus E_6 \oplus E_6 \oplus A_5 $ embeds in the Picard lattice of $ Y_\lambda $.

\item Similar to the case above, we compactify as anticanonical K3 surfaces in  $ \bb P^1 \times \bb P^1 \times \bb P^1 $.  Projection onto one of the $ \bb P^1 $ factors gives the generic K3 fiber an elliptic fibration with section.  Putting this into Weierstrass form and running Tate's algorithm gives an embedding of the rank 19 lattice $ U \oplus A_7 \oplus D_5 \oplus D_5 $ into the Picard lattice of the generic fiber.

\item Using Method \ref{quarticmethod}, there are singularities at $ p_{ \{i \}, \{j \}, \{4\}}$ for $1 \leq i \neq j \leq 3$ of type $ D_4 $ and at $  p_{\{i \} \{ j \}, \{k,4\}}$ where $\{i,j,k\} = \{1,2,3\}$ of type $A_1$.  Thus the exceptional curves generate a sublattice of rank 15.  The quartics also contain lines $ L_{ \{i\}, \{j, 4\}} $ and conics $ C_{\{i,j,4\}} $ for $ 1 \leq i \neq j \leq 3 $, subject to relations  from
\begin{align*}
H_{\{1\}} & =  2 L_{\{1\} , \{2,4 \}} + 2 L_{\{1 \}, \{3,4 \}} \\
H_{\{2\}} &= 2 L_{\{2\},\{1,4\}} + 2 L_{\{2\},\{3,4\}} \\
H_{\{3\}} &= 2 L_{\{3\},\{1,4\}} + 2 L_{\{3\},\{2,4\}} \\
 H_{\{1,2,4\}} &= L_{\{1\},\{2,4\}} + L_{\{2\},\{1,4\}} + C_{\{1,2,4\}} \\
H_{\{1,3,4\}} &= L_{\{1\},\{3,4\}} + L_{\{3\},\{1,4\}} + C_{\{1,3,4\}} \\
H_{\{2,3,4\}} &= L_{\{2\},\{3,4\}} + L_{\{3\},\{24\}} + C_{\{2,3,4\}}
\end{align*}
which leave a lattice of rank 19.

Explicitly computing the intersection matrix for the curves identified verifies that they generate a lattice of rank 19.

Alternately, we may use Method \ref{linemethod} with $ \ell =  L_{ \{1\}, \{2,4\}} $.  The induced fibration has singular fibers of types $ I_2^* $, $I_1^* $, $ I_6 $, and $ 3 I_1 $.  It also has a section of infinite order and a 2-torsion section.  Hence the Picard lattice of the generic member of this family is a rank 19 lattice containing $ U \oplus D_6 \oplus D_5 \oplus A_5 $ with quotient $ \bb Z \oplus \bb Z_2 $.

\item Using Method \ref{quarticmethod}, there are $A_1 $ singularities at $ [1,-1,0,0]$, $[1,0,-1,0]$, and $ [0,1,-1,0]$; $ A_2$ singularities at $ [1,0,0,0] $ and $[0,0,1,-1]$; and $A_3$ singularities at $ [0,1,0,0] $ and $ [1,0,0,-1]$.  These quartics also contain twelve lines:
\begin{align*}  L_{ \{1\}, \{2,3\}},
 L_{ \{1\}, \{3,4\}},
 L_{ \{1\}, \{2,3,4\}},
 L_{ \{2\}, \{3\}},
 L_{ \{2\}, \{3,4\}},
 L_{ \{2\}, \{1,3,4\}}, \\
 L_{ \{3\}, \{4\}},
 L_{ \{3\}, \{1,4\}},
 L_{ \{3\}, \{1,2,4\}},
 L_{ \{4\}, \{1,3\}},
 L_{ \{4\}, \{2,3\}},
 L_{ \{4\}, \{1,2,3\}} \end{align*}
subject to relations coming from setting equal the hyperplane sections $ H_{\{1\}}$, $ H_{\{2\}}$, $ H_{\{3\}}$, $ H_{\{4\}}$, $ H_{\{1,3,4\}}$, $ H_{\{1,2,3,4\}}$, $ H_{\{3,4\}}$, and $ H_{\{2,3\}}$.  These relations show that only six of these twelve lines are linearly independent.  Hence the exceptional locus and strict transforms of lines generate a sublattice of the Picard lattice of the minimal resolution K3's of rank 13+6 =19.

By explicitly computing the intersection matrix for the 25 rational curves identified, we verify that they generate a rank 19 lattice.

\item Again, use Method \ref{quarticmethod}.  The quartics are defined by
\begin{equation*} (x+y+z+w)(yz(x+y+z+w)+(y+z+w)(z+w)^2) - \lambda xyzw=0 \end{equation*}
The singularities are: $A_1$ at $[0,1,0,-1]$;  $A_2 $ at $[1,0,0,0]$, $ [0,1,-1,0]$, and $[\lambda,0,-1,1]$; $ A_3$ at $[0,0,1,-1]$; and $ A_4$ at $[1,-1,0,0]$.  The quartics contain eight lines
\[ L_{\{i\}, \{1,2,3,4\}} \: (1 \leq i \leq 4), \: L_{\{2\},\{3,4\}}, \:  L_{\{3\}, \{ 2,4 \}}, L_{\{3\}, \{4\}}, L_{\{2,3,4\},*} = \{ y+z+w= x - \lambda w = 0 \}\]
and two conics
\[ C_1 = \{ x=yz+(z+w)^2 = 0 \}, \: C_4 = \{ w =xy+(y+z)^2=0 \} \]
subject to relations coming from setting equal the hyperplane sections $ H_{\{1\}}$, $ H_{\{2\}}$, $ H_{\{3\}}$, $ H_{\{4\}}$, $ H_{\{2,3,4\}} $, and $ H_{\{1,2,3,4\}}$.  These relations show that these ten rational curves on the quartic generate a sublattice of rank five in the Picard lattice.  Hence the exceptional locus and the strict transforms of these ten curves generate a rank 19 sublattice of the Picard lattice of the minimal resolution, as can be explicitly verified by computing the intersection matrix for the curves identified.

\item Compactifying to singular quartics gives singularities of type $ A_1 $ at the six points
\[ [-1,0,0,1], [0,-1,0,1], [0,0,-1,1], [1,-1,0,0],[1,0,-1,0],[0,1,-1,0] \]
and singularities of type $A_2$ at the three points $ [1,0,0,0], [0,1,0,0], [0,0,1,0] $.  There are also 13 lines
\[ L_{\{i\}, \{1,2,3,4\}}, L_{\{j\}, \{4\}}, L_{\{j\},\{k,4\}} \: \mathrm{for} \: 1 \leq i \leq 4, \:  1 \leq j \neq k \leq 3 \]
subject to relations from setting equal the hyperplane sections by $ H_{\{i \}}, H_{\{j,4\}}, H_{\{1,2,3,4\}}$ for $ 1 \leq i \leq 4 $, $ 1 \leq j \leq 3 $.  These relations show that the lattice generated by the 13 lines has rank 7.  Hence the strict transforms  of the lines and the exceptional locus generate a lattice of rank 19.

\item Using Method \ref{linemethod} with $ \ell = L_{ \{4\}, \{1,2,3\}} $ gives a polarization of the Picard lattice of the minimal resolution by the rank 19 lattice $ U \oplus A_8 \oplus A_2 \oplus A_1 \oplus E_6 $.

\item The quartic compatification contains lines
	\begin{align*}
	 L_{ \{1\}, \{3\}},
L_{ \{1\}, \{4 \} },
L_{ \{1\}, \{2,4\}},
L_{\{1\}, \{3,4\}},
L_{\{2\}, \{3\}},
L_{ \{2 \}, \{4\} },
L_{\{2\}, \{1,4\}},
L_{\{2\},\{3,4\}},
L_{\{3\}, \{1,4\}}, \\
 L_{\{3\}, \{2,4\}},
L_{ \{1,3 \}, \{4\} }
L_{ \{2,3 \}, \{4 \} },
L_{ \{1,4\}, *} = \{ x+w=(s-2)x+y =0 \},
\\
L_{\{2,4\},*} = \{ y+w = (s-2)y+x = 0 \}
\end{align*}
and conics
\begin{align*}
 C_{\{3,4\}} = \{ z+w = xy +(\lambda-2)z^2 =0 \},\\
 C_{\{1,2,4\}} =\{ x+y+w= x y + (\lambda - 3) (x + y) z + z^2=0\} , \\
  C = \{ z = (\lambda+1)w, (\lambda+1)w^2+xy=0 \},\\
  C' = \{ z = (\lambda+1)w, 2 w (w + x + y) + \lambda w (x + y) + x y=0 \}
\end{align*}
subject to relations coming from $ H_{\{i\}} $,
and singularities of types $ A_3 $ at $[1,0,0,0]$ and $ [0,1,0,0]$,  $ A_2 $ at $ [0,0,1,0] $, and $ A_1 $ at $ [-1,0,0,1]$ and $ [0,-1,0,1]$.

The lines are subject to relations from setting equal $ H_{\{1\}} $, $H_{\{2\}}$, $ H_{\{3\}} $, $H_{\{4\}} $, $ H_{\{1,3\}} $, $H_{\{2,3\}} $, $ H_{\{1,4\}} $, $ H_{\{2,4\}} $, and $ H_{\{3,4\}} $.

Computing the intersection matrix for these curves verifies that they generate a rank 19 lattice.

\item As shown in \cite{Prz09}, the fibers of $ f_{11} $ can be compactified to quartics
\begin{equation*} \tilde{f}_{11} =  x^4 - (\lambda y -z)(x w -xy-w^2)z = 0 \end{equation*}
These quartics contain lines
\[ L_{ \{1\}, \{3\}}, L_{\{1\}, \{4\}}, L_{\{1\},*} = \{ x= \lambda y - z =0\} \]
We now use Method \ref{linemethod} on the fibers of $ \tilde{f}_{11} $ with $ \ell =  L_{\{1\}, \{3\}} $.  Putting this fibration into Weierstrass form and applying Tate's algorithm gives a polarization by $ U \oplus E_7 \oplus D_{10} $.

\item Using Method \ref{linemethod} with $ \ell =  L_{\{1\}, \{2,4\}} $, the induced polarization is by the rank 19 lattice $ U \oplus E_6 \oplus A_{11} $.

\item Using Method \ref{linemethod} with $ \ell =  L_{\{1\}, \{4\}} $ gives an elliptic fibration that results in a polarization by $ U \oplus E_6 \oplus E_6 \oplus A_5 $.

\item Using Method \ref{toricmethod} with $ m=(0,0,1) $ yields a fibration with fibers of type $ I_8 $ at $ \infty $ and $ I_1^* $ at $ t = \frac{1}{2} \left(\lambda \pm \sqrt{\lambda ^2+16}\right) $.  Hence the fibers carry an $ U \oplus A_7 \oplus D_5 \oplus D_5 $ polarization, as in case number 4 above.

\item

Using Method \ref{toricmethod} with $ m = (1,1,0) $ induces an elliptic fibration with Weierstrass form
\begin{equation*}
-\frac{1}{48} t^2 P(s,t) u+\frac{1}{864} t^3 \left(s^2 (-t)+4 t^2+12 t+8\right) \left( P(s,t) + 24(1+t)^2 \right)+u^3+v^2= 0
\end{equation*}
where $ P(s,t) = s^4 t^2-8 s^2 t^3-24 s^2 t^2-16 s^2 t+16 t^4+24 t^3-8 t^2-24 t-8 $.  This fibration has a section of infinite order given by
\begin{equation*}
t \mapsto \left( -\frac{1}{12} t \left(s^2 t+8 t^2+12 t+4\right),-\frac{1}{2} s t^2 (t+1)^2 \right) = (u,v)
\end{equation*}
and a 2-torsion section given by
\begin{equation*}
t \mapsto \left( \frac{1}{12} \left(-s^2 t+4 t^2+12 t+8\right), 0 \right) = (u,v). \end{equation*}

Hence by Proposition \ref{fibprop}, $ NS(X) $ is a rank 19 lattice containing $ U \oplus D_6 \oplus D_5 \oplus A_5 $ with quotient $ \bb Z \oplus \bb Z_2 $.

\item Using Method \ref{toricmethod} with $ m = (1,0,0) $ gives a fibration with lattice $ U \oplus E_6 \oplus E_6 \oplus A_5 $ plus additional sections.

\item Using Method \ref{toricmethod} with $m=(0,0,1)$ yields a polarization by $ U \oplus E_6 \oplus A_{11} $.

\end{enumerate}

\begin{remark} \cite{Golyshev2005} shows that the Landau--Ginzburg models for these cases have the same variation of Hodge structure (up to pullback) as a modular variation associated to products of elliptic curves with isogeny.  Explicitly, for $X$ one of the Fano threefolds under consideration, let $ (N,d) = \left(\frac{\deg(X)}{2 \cdot \mathrm{ind}(X)^2}, \mathrm{ind}(X)\right)$.  Let $ X_0(N)+N $ denote the modular curve $ \overline{(\Gamma_0(N)+N) \backslash \bb{H}} $, and let $ t_N $ be a hauptmodul for $ X_0(N)+N $ such that $t_N = 0 $ at the image of the cusp $ i \infty $.  The Picard--Fuchs equation for the Landau--Ginzburg model of $X$ is now the pullback of the symmetric square of the uniformizing differential equation for $ X_0(N)+N $ by $ \lambda = t_N^d $.

We can check that the pullback part of Golyshev's theorem follows in a straightforward way from the geometry of the fibers of the Landau--Ginzburg model:

\begin{itemize}

\item {\bf Cases 1 and 11:} Both have polarizations by $ U \oplus E_7 \oplus D_{10} $.  Clearly, since the moduli space of $ U \oplus E_7  \oplus D_{10} $ polarized  K3 surfaces is 1-dimensional, we see {\em a posteriori} that the Landau--Ginzburg models $ f_1 $, $ \tilde{f}_{11} $ have isomorphic K3-compactified fibers.

\item {\bf Cases 2, 12, and 17:}    Similarly, since the moduli space of $ U \oplus E_6 \oplus A_{11} $ polarized  K3 surfaces is 1-dimensional, we see {\em a posteriori} that the Landau--Ginzburg models $ f_2 $, $ f_{12} $, and $ f_{17} $ have isomorphic fibers.    Writing the Weierstrass forms for the elliptic fibrations that give this polarization in each case, we can match the fibrations fiberwise to check that indeed case 12 is a pullback of case 2 by $ \lambda \mapsto \lambda^2 $, and similarly case 17 is a pullback of case 2 by $ \lambda \mapsto \lambda^4 $.
\item {\bf Cases 3, 13, and 16:} Similar to the previous cases, using the polarizations by $ U \oplus E_6 \oplus E_6 \oplus A_5 $.
\item {\bf Cases 4 and 14:} Similar to the previous cases, using the polarizations by  $ U \oplus A_7 \oplus D_5 \oplus D_5 $.
\item {\bf Cases 5 and 15:}  Similar to the previous cases, using the elliptic fibrations with Mordell--Weil group $ \bb Z \oplus \bb Z_2 $.
\end{itemize}

\end{remark}

\bibliography{fanodegen}

\newcommand{\etalchar}[1]{$^{#1}$}
\begin{thebibliography}{CDLW09}

\bibitem[AB04]{alexeev:04a}
Valery Alexeev and Michel Brion.
\newblock {Toric degenerations of spherical varieties}.
\newblock {\em Selecta Math. (N.S.)}, 10(4):453--478, 2004.
\newblock Available at arXiv:math/0403379.

\bibitem[Alt95]{altmann:95a}
Klaus Altmann.
\newblock Minkowski sums and homogeneous deformations of toric varieties.
\newblock {\em Tohoku Math. J. (2)}, 47(2):151--184, 1995.

\bibitem[Bat94]{batyrev:94a}
Victor~V. Batyrev.
\newblock Dual polyhedra and mirror symmetry for {C}alabi--{Y}au hypersurfaces
  in toric varieties.
\newblock {\em J. Algebraic Geom.}, 3(3):493--535, 1994.
\newblock Available at arXiv:alg-geom/9310003.

\bibitem[BK12]{BK12}
Victor Batyrev and Maximilian Kreuzer.
\newblock {Conifold degenerations of Fano 3-folds as hypersurfaces in toric
  varieties}.
\newblock arXiv:1203.6058, 2012.

\bibitem[CCG{\etalchar{+}}11]{fanosearch}
Tom Coates, Alessio Corti, Sergei Galkin, Vasily Golyshev, and Al~Kasprzyk.
\newblock {Fano Varieties and Extremal Laurent Polynomials. {A} collaborative
  research blog}.
\newblock http://coates.ma.ic.ac.uk/fanosearch/, 2011.

\bibitem[CD07]{CD1}
Adrian Clingher and Charles~F. Doran.
\newblock Modular invariants for lattice polarized {$K3$} surfaces.
\newblock {\em Michigan Math. J.}, 55(2):355--393, 2007.
\newblock Available at arXiv:math/0602146.

\bibitem[CDLW09]{CDLW}
Adrian Clingher, Charles~F. Doran, Jacob Lewis, and Ursula Whitcher.
\newblock Normal forms, {$K3$} surface moduli, and modular parametrizations.
\newblock In {\em Groups and symmetries}, volume~47 of {\em CRM Proc. Lecture
  Notes}, pages 81--98. Amer. Math. Soc., Providence, RI, 2009.
\newblock Available at arXiv:0712.1880.

\bibitem[CI12a]{ilten:11a}
Jan Christophersen and Nathan~Owen Ilten.
\newblock Degenerations to unobstructed {S}tanley--{R}eisner schemes.
\newblock arXiv:1102.4521v2, 2012.

\bibitem[CI12b]{ilten:12a}
Jan Christophersen and Nathan~Owen Ilten.
\newblock Toric degenerations of low degree {F}ano threefolds.
\newblock arXiv:1202.0510v2, 2012.

\bibitem[CKP]{CKPb}
Ivan Cheltsov, Ludmil Katzarkov, and Victor Przyjalkowski.
\newblock {Projecting Fanos in the mirror}.
\newblock In preparation.

\bibitem[CKP12]{CKPa}
Ivan Cheltsov, Ludmil Katzarkov, and Victor Przyjalkowski.
\newblock {Birational geometry via moduli spaces}.
\newblock {\em To appear in ``Birational geometry, rational curves, and
  arithmetic --- Simons symposium 2012''.}, 2012.

\bibitem[DKLP]{DKLP}
Charles Doran, Ludmil Katzarkov, Jacob Lewis, and Victor Przyjalkowski.
\newblock {Modularity of Fano threefolds}.
\newblock In preparation.

\bibitem[Dol96]{Dol}
I.~V. Dolgachev.
\newblock Mirror symmetry for lattice polarized {$K3$} surfaces.
\newblock {\em J. Math. Sci.}, 81(3):2599--2630, 1996.
\newblock Available at arXiv:alg-geom/9502005.

\bibitem[Ful93]{fulton:93a}
William Fulton.
\newblock {\em Introduction to toric varieties}, volume 131 of {\em Annals of
  Mathematics Studies}.
\newblock Princeton University Press, Princeton, NJ, 1993.
\newblock The William H. Roever Lectures in Geometry.

\bibitem[Gal08]{galkin:08a}
Sergey Galkin.
\newblock Small toric degenerations of {F}ano threefolds.
\newblock http://sergey.ipmu.jp/std.pdf, 2008.

\bibitem[Gol07]{Golyshev2005}
Vasily Golyshev.
\newblock {Classification problems and mirror duality.}
\newblock {Young, Nicholas (ed.), Surveys in geometry and number theory.
  Reports on contemporary Russian mathematics. Cambridge: Cambridge University
  Press. London Mathematical Society Lecture Note Series 338, 88-121 (2007).},
  2007.
\newblock Avaliable at arXiv:math/0510287.

\bibitem[GS07]{GS07}
Vasily Golyshev and Jan Stienstra.
\newblock Fuchsian equations of type {DN}.
\newblock {\em Commun. Number Theory Phys.}, 1(2):323--346, 2007.
\newblock Available at arXiv:0701936.

\bibitem[IKP11]{IKP11}
Atanas Iliev, Ludmil Katzarkov, and Victor Przyjalkowski.
\newblock {Double solids, categories and non-rationality}.
\newblock To appear in Proceedings of the EMS, Shokurov volume, 2011.
\newblock Avaliable at arXiv:1102.2130.

\bibitem[Ilt12]{ilten:11d}
Nathan~Owen Ilten.
\newblock Versal deformations and local {H}ilbert schemes.
\newblock {\em J. Softw. Algebra Geom.}, 3:12--16, 2012.

\bibitem[Isk77]{Isk77}
V.~A. Iskovskih.
\newblock Fano threefolds. {I}.
\newblock {\em Izv. Akad. Nauk SSSR Ser. Mat.}, 41(3):516--562, 717, 1977.

\bibitem[Isk78]{Isk78}
V.~A. Iskovskih.
\newblock Fano threefolds. {II}.
\newblock {\em Izv. Akad. Nauk SSSR Ser. Mat.}, 42(3):506--549, 1978.

\bibitem[Kap11]{kapustka:11a}
Michal Kapustka.
\newblock {S}ome degenerations of {$G_2$} and {C}alabi--{Y}au varieties.
\newblock arXiv:1103.4623, 2011.

\bibitem[KP11]{KP11}
Ludmil Katzarkov and Victor Przyjalkowski.
\newblock {Landau--Ginzburg models --- old and new}.
\newblock {\em Proceedings of the 18th Gokova geometry-topology conference,
  Gokova, Turkey. Cambridge, MA: International Press}, pages 97--124, 2011.

\bibitem[KS02]{KreuzerSkarke}
Maximilian Kreuzer and Harald Skarke.
\newblock Reflexive polyhedra, weights and toric {C}alabi--{Y}au fibrations.
\newblock {\em Rev. Math. Phys.}, 14(4):343--374, 2002.
\newblock Available at arXiv:0001106.

\bibitem[MM82]{mori:81a}
Shigefumi Mori and Shigeru Mukai.
\newblock Classification of {F}ano {$3$}-folds with {$B_{2}\geq 2$}.
\newblock {\em Manuscripta Math.}, 36(2):147--162, 1981/82.

\bibitem[Prz07]{Prz07}
Victor Przyjalkowski.
\newblock On {L}andau--{G}inzburg models for {F}ano varieties.
\newblock {\em Commun. Number Theory Phys.}, 1(4):713--728, 2007.
\newblock Available at arXiv:0707.3758.

\bibitem[Prz08]{Prz08}
Victor Przyjalkowski.
\newblock {Minimal Gromov--Witten ring}.
\newblock {\em Izv. Math.}, 72(6):1253--1272, 2008.
\newblock Available at arXiv:0710.4084.

\bibitem[Prz11]{Prz10}
Victor Przyjalkowski.
\newblock {H}ori--{V}afa mirror models for complete intersections in weighted
  projective spaces and weak {L}andau--{G}inzburg models.
\newblock {\em Cent. Eur. J. Math.}, 9(5):972--977, 2011.
\newblock Available at arXiv:1003.5200.

\bibitem[Prz13]{Prz09}
Victor Przyjalkowski.
\newblock Weak {L}andau--{G}inzburg models for smooth {F}ano threefolds.
\newblock To appear in Izv. Math., 77(4) (Shafarevich volume), 2013.
\newblock Avaliable at arXiv:0902.4668.

\bibitem[Ram02]{TOPCOM}
J{\"o}rg Rambau.
\newblock {TOPCOM}: Triangulations of point configurations and oriented
  matroids.
\newblock In Arjeh~M. Cohen, Xiao-Shan Gao, and Nobuki Takayama, editors, {\em
  Mathematical Software---ICMS 2002}, pages 330--340. World Scientific, 2002.

\end{thebibliography}

\address{
Nathan Owen Ilten\\
Dept. of Mathematics\\
University of California\\
Berkeley, CA 94720
}{nilten@math.berkeley.edu}

\address{
Jacob Lewis \\
Fakult\"at f\"ur Mathematik\\
Universit\"at Wien \\
Garnisongasse 3/14 \\
A-1090 Wien,
Austria
}{JacobML@uw.edu}

\address{
Victor Przyjalkowski\\
Steklov Mathematical Institute\\
Gubkina st., 8\\
119991, Moscow, Russia
}{victorprz@mi.ras.ru,\\
victorprz@gmail.com}

\end{document}